\theoremstyle{plain}
\newtheorem{theorem}{Theorem}[section]
\newtheorem{lemma}[theorem]{Lemma}
\newtheorem{corollary}[theorem]{Corollary}
\newtheorem{remark}[theorem]{Remark}
\theoremstyle{remark}
\g@addto@macro\normalsize{%
  \setlength\abovedisplayskip{8pt plus 3pt minus 3pt}
  \setlength\belowdisplayskip{8pt plus 3pt minus 3pt}
  \setlength\abovedisplayshortskip{6pt plus 3pt minus 2pt}
  \setlength\belowdisplayshortskip{6pt plus 3pt minus 2pt}
}
\def\leq{\leqslant}
\def\geq{\geqslant}
\def\le{\leqslant}
\def\ge{\geqslant}
\numberwithin{equation}{section}
\def\({\bigl(}
\def\){\bigr)}
\def\dfrac#1#2{\lower0.15ex\hbox{\large$\textstyle\frac{#1}{#2}$}}
\def\({\bigl(}
\def\){\bigr)}
\def\obj{H}
\let\eps=\varepsilon
\def\P{\boldsymbol{P}}
\def\HG{\boldsymbol{H}}
\def\X{\boldsymbol{X}}
\def\Y{\boldsymbol{Y}}
\def\Reals{{\mathbb{R}}}
\def\Naturals{{\mathbb{N}}}
\def\Bin{\operatorname{Bin}}
\newcommand{\m}{~\middle|~}
\newcommand{\A}{\mathbf A}
\renewcommand{\P}[1]{\Pr\left(#1\right)}
\newcommand{\E}[1]{\mathbf E\left[#1\right]}
\def\HG{\mathcal{H}}
\renewcommand{\o}[1]{o\left(#1\right)}
\newcommand{\D}{\mathbf D}
\newcommand{\red}[1]{#1}
\newcommand{\purple}[1]{#1}
\begin{document}

\begin{frontmatter}

\title{Extremal independence in discrete random systems }
\runtitle{Extremal independence in discrete random systems}

\begin{aug}
\author[A]{\inits{M.}\fnms{Mikhail}~\snm{Isaev}\ead[label=e1]{mikhail.isaev@monash.edu}},
\author[B]{\inits{I.}\fnms{Igor}~\snm{Rodionov}\ead[label=e2]{igor.rodionov@epfl.ch}
},
\author[A]{\inits{R.}\fnms{Rui-Ray}~\snm{Zhang}\ead[label=e3]{rui.zhang@monash.edu}}
\and
\author[C]{\inits{M.}\fnms{Maksim}~\snm{Zhukovskii}\ead[label=e4]{zhukmax@tauex.tau.ac.il}}
\address[A]{School of Mathematics,
Monash University, Clayton, VIC, Australia \printead[presep={,\ }]{e1,e3}}

\address[B]{Institute of Mathematics,  EPFL, Lausanne, Switzerland 
\printead[presep={,\ }]{e2}}

\address[C]{School of Mathematical Sciences, Tel Aviv University, Tel Aviv, Israel 
\printead[presep={,\ }]{e4}}
\end{aug}

\begin{abstract}
Let $\X(n) \in \Reals^{d}$ be a sequence of random vectors, where  $n\in\mathbb{N}$ and $d = d(n)$.
Under certain weakly dependence conditions, we  prove that the distribution of the maximal component  of $\X$
and the distribution of the   maximum of  their independent copies  are
asymptotically equivalent.
Our result on extremal independence relies on new lower and upper bounds for
the probability that none of a given finite set of events occurs. As applications, we obtain the distribution of   various extremal characteristics of  random discrete structures such as
maximum codegree in binomial random hypergraphs and the maximum number of cliques sharing a given vertex in binomial random graphs.  We also generalise Berman-type conditions for  a sequence of Gaussian random vectors to possess the extremal independence property.
\end{abstract}


\begin{keyword}[class=MSC]
\kwd[Primary ]{60G70}
\kwd{60C05}
\kwd[; secondary ]{60F05}
\kwd{05C80}
\end{keyword}

\begin{keyword}
\kwd{Extreme value theory}
\kwd{Limit theorems}
\kwd{Random graphs}
\kwd{Gaussian vectors}
\kwd{Berman condition}
\end{keyword}

\end{frontmatter}

\section{Introduction}

Fisher--Tippett--Gnedenko theorem is central in the extreme value theory;
it was discovered first by Fisher and Tippett \cite{fishertippett}
and later proved in full generality by Gnedenko \cite{gnedenko}.
This theorem states that
if the maximum of the first $n$ terms of a sequence of independent and identically distributed (i.i.d.)
random variables has a non-degenerate limit distribution after a proper normalisation,
then it belongs to either the Gumbel, the Fr\'echet, or the Weibull families of distributions.

The FTG theorem generalises to stationary random sequences
of dependent random variables
under the additional assumptions that its distant terms are independent \cite{watson1954extreme} or weakly dependent \cite{loynes}.
 Leadbetter in \cite{leadbetterself} significantly relaxed the assumptions of \cite{watson1954extreme, loynes}. 
 The analogues of Leadbetter's conditions were also found for non-stationary sequences \cite{husler1, husler} and for random fields \cite{leadbetterrootzen, ling, pereiraferreira}. In fact, the behaviour of maxima for non-stationary sequences is more complicated than that for the stationary case. Even in the simplest case when the variables are independent, the limit distribution might not belong to any of
the Gumbel, the Frechet or the Weibull families, see {\cite[Section 8.3]{falk}}.
That is, it is  impossible to classify  all possible limit distributions for general random systems with dependencies.
Nevertheless, extremal characteristics of such systems  always attracted significant attention of researchers in computer science, statistical physics, financial mathematics and network studies. For example,  in a recent work
 \cite{resnick}, the authors study tail indices of in-degree and out-degree of the nodes of social networks.  However, they lack to justify that their methods such as Hill estimator can be extended to
 non-i.i.d. data.
%
 Similar lack of mathematically rigorous justification occurs in several papers on ranking web pages~\cite{jelenkovic,volkovich}.

In this paper,
we 
focus on the following
extremal independence property that  helps to reduce general random systems
to the independent case, where standard statistical techniques apply.
%
Let $\X(n) =(X_1(n), \ldots, X_{d}(n))^T \in \Reals^{d}$
be a sequence of random vectors,
where $d = d(n)$ be a sequence of positive integers.
We give sufficient conditions for the property that
\begin{equation}
\left| \P{ \max_{i \in [d]} X_i \leq x }
- \prod_{i\in [d]} \P{ X_i \leq x } \right|
\rightarrow 0 \qquad \text{for any fixed $x\in\Reals$.}
\label{a_e_indep_formula}
\end{equation}
All asymptotics in this paper refer to the passage of $n$ to infinity and the notations
$o(\cdot)$, $O(\cdot)$, \red{$\omega(\cdot)$}, $\Omega(\cdot)$, \red{$\Theta(\cdot)$} have the standard meaning.

Allowing arbitrary sequences of vectors   $\X(n)$ in \eqref{a_e_indep_formula}
encapsulates
several similar questions arising in the studies of
sequences of random variables,
triangular arrays, random fields, and so on.
For example, for a sequence $\xi_1, \xi_2, \ldots $ of identically distributed (i.d.) random variables, one can set
\[
X_i(n):= \frac{\xi_i - a_n}{b_n},
\]
 where  $a_n$ and $b_n$ are the normalising constants from the FTG theorem. This immediately extends  the FTG theorem to  the  sequences   of dependent i.d. random variables that satisfy our sufficient conditions.\\

Clearly, the extremal independence property (\ref{a_e_indep_formula}) is equivalent to
\begin{equation}\label{a_e_indep_formula_for_A}
\left|\P{ \bigcap_{i \in [d]} \overline{ A_i } }-\prod_{i\in [d]} \P{\overline{ A_i } }\right|\to 0,  
\end{equation}
where
the system of events $\A$ is defined by
\begin{align}
\A = \A(n, x) := (A_i)_{i \in [d]},
\quad
A_i := \{ X_i > x \},
\label{event}
\end{align}
and $\overline{ A_i } $ is the complement event of $A_i$. Estimates for the probability of non-occurrence of events appear in many applications in probabilistic combinatorics and number theory.  In particular, to justify  the existence of a certain object, it is sufficient  to show
that the related probability (over all places where this object might appear) is positive;  see, for example, \cite[Section 5]{Alon}.

In this paper, we establish new bounds for \eqref{a_e_indep_formula_for_A} by developing the idea proposed by Galambos \cite{galambos1972distribution, galambos1988variants} and Arratia, Goldstein, Gordon~\cite{AGG}: the weak and strong dependencies between events $(A_i)_{i\in[d]}$ are considered separately, and the bounds do not incorporate the computation of moments of the number of occurrences $Z = \sum_{i \in [d]} \mathbbm{1}(A_i)$ higher than the second one. This allows to overcome the disadvantages of classical bounds. In particular, bounds based on dependency graphs (Lov\'{a}sz Local Lemma (LLL)~\cite{LLL}, Janson's inequality~\cite{janson1988exponential}, Suen's inequality~\cite{suen}) allow complicated dependence structures, but  often fail to characterise the relations quantitatively. Applying  the method of moments  gets complicated when high factorial moments diverge or 
\red{are} hard to compute. Both the Stein-Chen method~\cite{AGG, BHJ1992}  and the method of moments often give suboptimal bounds  in \eqref{a_e_indep_formula_for_A} as they deal with the whole distribution of $Z$ instead of focusing on the probability at $0$. Our bounds for \eqref{a_e_indep_formula_for_A} do not require computation of high moments and  the proofs are based on elementary techniques inspired by LLL.  To demonstrate the simplicity in application and effectiveness of our bounds, we derive new results on  distributions of extremal characteristics of Gaussian systems and   of maximal pattern extensions counts in random network models.\\

The paper is organised as follows.
 Our new bounds for  the extremal independence property (\ref{a_e_indep_formula})
 are stated in Section \ref{Intro_results}   as  Theorem~\ref{Tmain}.
 In Section \ref{Sec:related},  we give a detailed comparison of Theorem~\ref{Tmain} to the related results
 including aforementioned   papers \cite{AGG, galambos1972distribution, galambos1988variants}. In Section \ref{S:bridging}, we give two useful lemmas that facilitate verifying the assumptions.
We prove   Theorem~\ref{Tmain}  in Section~\ref{sec:Janson_bounds}: the upper and lower bounds
are treated separately in  Section \ref{Janson_upper} and Section \ref{Dubicas}, respectively.
Section~\ref{sec:gauss} is devoted to applications of  our new bounds to Gaussian random vectors.
In Section~\ref{rg_applications}, we apply Theorem~\ref{Tmain} for finding the \purple{asymptotic} distribution of maximum number of pattern extensions in binomial random graphs. 

%

%

\section{Sufficient conditions for extremal independence}
\label{Intro_results}

Let $\A := ( A_i )_{i\in[d]}$ be a system of events.  Everywhere below we assume that $\Pr(A_i)\neq 0$. Clearly, this assumption does not lead the loss of the generality since the events of zero probability  can be excluded from $\A$  without affecting the expression in~(\ref{a_e_indep_formula_for_A}).   We represent the dependencies among the events of $\A$ by a graph $\D$
on the vertex set $[d]$ with edges
indicating the pairs of `strongly dependent' events,
while non-adjacent vertices correspond to `weakly dependent' events.
One can think of $\D$ as a set system $(D_i)_{i \in [d]}$,
where $D_i \subseteq [d]$ is the closed neighbourhood of vertex $i$ in graph $\D$.
Moreover, we allow $\D$ to be a directed graph, that is,
there might exist $i, j \in [d]$, such that $i \in D_j$ and $j \not\in D_i$.

To measure the quality of the representation of the dependencies for $\A$
by a graph $\D$,
we introduce the following mixing coefficient:
\begin{align}\label{phi}
\varphi(\A, \D)
:= \max_{i \in [d]}  \left|
\Pr\left(
\bigcup_{j \in [i-1] \setminus D_i   } A_j ~\middle|~ A_i\right)
- \Pr\left(\bigcup_{j \in [i-1] \setminus D_i   } A_j   \right)  \right|.
\end{align}
 This is a special case of $\phi$-mixing coefficient widely used in the probability theory; see, for example, survey \cite{Bradley}.

The influence of `strongly dependent' events is measured by declustering coefficients
$\Delta_1$ and $\Delta_2$ defined by
\begin{align}
\Delta_1(\A, \D)
&:= \sum_{i\in[d]} \P{ A_i\cap \bigcup_{j \in [i - 1] \cap D_i }A_j} \prod_{k\in [d ]\setminus[i]} \P{\overline{ A_k } },
\label{del1} \\
\Delta_2(\A, \D)
&:= \sum_{i\in[d]} \P{ A_i} \P{ \bigcup_{j \in [i - 1] \cap D_i }A_j}
\prod_{k \in [d ]\setminus[i]} \P{\overline{ A_k } }
\label{del2}.
\end{align}
\red{In our model, the choice of graph $\D$ is arbitrary, and therefore the flexibility may leads to the trade-off between the mixing coefficient $\varphi(\A, \D)$
and declustering coefficients $\Delta_1(\A, \D)$ and $\Delta_2(\A, \D)$ for different applications,
since $\Delta_1(\A, \D)$ and $\Delta_2(\A, \D)$
increase as $\D$ gets denser,
and $\varphi(\A, \D)$ typically decreases.}

We are ready to state our sufficient condition for satisfying (\ref{a_e_indep_formula_for_A}).

\begin{theorem}\label{Tmain}
For any system of events $\A = (A_i)_{i \in [d]}$
and graph $\D$ with vertex set $[d]$,
the following bound holds
\begin{align}
\left| \P{ \bigcap_{i \in [d]} \overline{ A_i } }
- \prod_{i\in [d]} \P{\overline{ A_i } } \right|
\le \left(1 - \prod_{i\in [d]} \P{\overline{ A_i } } \right) \varphi +
\max\{ \Delta_1, \Delta_2 \},
\label{eq:Tmain}
\end{align}
where $\varphi = \varphi(\A, \D)$,
$\Delta_1 = \Delta_1(\A, \D)$,
and $\Delta_2 = \Delta_2(\A, \D)$.
\end{theorem}


Although the proof of Theorem~\ref{Tmain} is elementary (see Section~\ref{sec:Janson_bounds}), it gives a very useful and convenient tool to prove   extremal independence property \eqref{a_e_indep_formula}
 stated below.

\begin{corollary}\label{Cmain}
Let $d = d(n) \in \mathbb N$, $\X(n) =(X_1, \ldots, X_d)^T
\in \Reals^d$,
and $\A$ is defined in (\ref{event}).
If for every fixed $x \in \mathbb{R}$,
there is a graph $\D = \D(n, x)$ such that
\begin{align}
\varphi(\A, \D) = o(1),
\quad \Delta_1(\A, \D) = o(1),
\quad \Delta_2(\A, \D) = o(1),
\label{a_e_indep_assumptions}
\end{align}
then, (\ref{a_e_indep_formula}) holds.
\end{corollary}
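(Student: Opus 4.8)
The plan is to obtain Corollary~\ref{Cmain} as an essentially immediate consequence of Theorem~\ref{Tmain}, so the argument is pointwise in $x$ and purely a matter of bookkeeping. Fix $x \in \Reals$ and take the system $\A = \A(n,x)$ from \eqref{event}, so that $\overline{A_i} = \{X_i \le x\}$ and
\[
\left| \P{ \max_{i \in [d]} X_i \le x } - \prod_{i \in [d]} \P{ X_i \le x } \right|
= \left| \P{ \bigcap_{i \in [d]} \overline{A_i} } - \prod_{i \in [d]} \P{ \overline{A_i} } \right| ;
\]
it therefore suffices to show this quantity is $o(1)$ as $n \to \infty$. Before invoking Theorem~\ref{Tmain} I would discard every event with $\Pr(A_i) = 0$: as noted just before the statement of Theorem~\ref{Tmain}, this leaves both sides of the displayed identity unchanged (such an event contributes a factor $1$ to the product and is almost surely avoided inside the intersection), and afterwards the standing hypothesis $\Pr(A_i) \neq 0$ is met.

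Next I would apply Theorem~\ref{Tmain} to this (possibly pruned) system together with the graph $\D = \D(n,x)$ supplied by the hypothesis. Since $0 \le 1 - \prod_{i \in [d]} \P{\overline{A_i}} \le 1$ and $\max\{\Delta_1,\Delta_2\} \le \Delta_1 + \Delta_2$, the bound \eqref{eq:Tmain} gives
\[
\left| \P{ \bigcap_{i \in [d]} \overline{A_i} } - \prod_{i \in [d]} \P{ \overline{A_i} } \right|
\le \varphi(\A,\D) + \Delta_1(\A,\D) + \Delta_2(\A,\D) .
\]
By assumption \eqref{a_e_indep_assumptions} each of the three terms on the right is $o(1)$, hence so is their sum. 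As $x \in \Reals$ was arbitrary, this is precisely \eqref{a_e_indep_formula_for_A}, which is equivalent to the extremal independence property \eqref{a_e_indep_formula}.

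I do not expect any genuine obstacle in this step, since all the substance is already packaged in Theorem~\ref{Tmain}. The only points that need a line of care are the reduction to events of positive probability and the crude estimate replacing the prefactor $1 - \prod_i \P{\overline{A_i}}$ by $1$, which together convert the three separate $o(1)$ hypotheses into a single $o(1)$ bound on the quantity of interest. The real difficulty — constructing, for each $n$ and $x$, a graph $\D$ for which $\varphi$, $\Delta_1$ and $\Delta_2$ are simultaneously $o(1)$ — is not part of this corollary; it is addressed by the bridging lemmas of Section~\ref{S:bridging} and carried out case by case in the applications.
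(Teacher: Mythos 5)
Your argument is correct and is exactly the route the paper intends: Corollary~\ref{Cmain} is an immediate consequence of Theorem~\ref{Tmain}, applied for each fixed $x$ with the graph $\D(n,x)$, after bounding the prefactor $1-\prod_i\P{\overline{A_i}}$ by $1$ and $\max\{\Delta_1,\Delta_2\}$ by the sum, together with the stated equivalence of \eqref{a_e_indep_formula} and \eqref{a_e_indep_formula_for_A} and the remark on discarding zero-probability events. No gaps.
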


 Corollary \ref{Cmain}  can be applied to  extremal problems arising in a variety of random models including but not limited to  the following:
\begin{itemize}
		\item random  discrete time vector processes,
		\item random graphs and hypergraphs,
		\item random fields on lattices.
	\end{itemize}
 To illustrate this, we find new sufficient conditions for Gaussian random vectors  to satisfy the
  extremal independence property (\ref{a_e_indep_formula})
  generalising previously  known  conditions; see Theorem~\ref{Gauss}.
  We also extend  Bollob\'{a}s result~\cite{bollobas} on the limit distribution of the maximum degree of
  binomial random graph $\mathcal{G}_{n,p}$ to the hypergraph setting; see Section \ref{max_degree_result}. Our result on the distribution of maximum extension counts implies the law of large numbers by Spencer~\cite{spencer_extensions} and optimizes the denominator for clique extensions;
  see Sections \ref{max_clique_result}--\ref{extensions_further}.  Corollary \ref{Cmain}  simplifies the arguments of  \cite{rodionov2018distribution} for the maximum number of $h$-neighbours  and extends it to unbounded $h$; see  Section \ref{S:max-h-neighbours}.

%
%

There are also a few other straightforward  applications of our new bounds that we decided to cover separately in the future paper(s):
1) distribution of the max number of common neighbours in random regular graphs;
2) distinguishing binomial random graphs by first order logics~\cite{BZ};
3) extensions of  the results   \cite{leadbetterrootzen,  pereiraferreira} on random fields.

Recent results 
\cite{stark2018probability, mousset2020probability, zhang2022asymptotic} 
derive more accurate estimates for $\P{ \bigcap_{i \in [d]} \overline{ A_i } }$
using truncated cumulant series and investigating clusters of dependent random variables. 
It will be interesting to obtain similar extensions of Theorem~\ref{Tmain} 
relying on bounds for clusters of strongly dependent random variables.

\subsection{Related results}
\label{Sec:related}

By the union bound, it is easy to see that
\begin{align*}
\Delta_1(\A, \D)
&\le \Delta_1'(\A, \D)  := \sum_{i\in[d]} \sum_{j \in [i-1] \cap D_i } \P{ A_i \cap A_j }, \\
\Delta_2(\A, \D)
&\le \Delta_2'(\A, \D) := \sum_{i\in[d]} \sum_{j \in [i-1] \cap D_i } \P{ A_i} \P{ A_j}.
\end{align*}
The declustering assumption $\Delta_1'(\A, \D) = o(1)$
is typical in the
study of extremal characteristics of random systems. It guarantees that the clusters of exceedances $A_i$ are negligible.
The assumption $\Delta_2'(\A, \D) = o(1)$ is easy to verify. 
For example, if all probabilities $\Pr(A_i)$ are  of the same order \red{$n^{-1}$},
then this assumption is equivalent to the graph $\D$ to be sparse, which  usually happens in applications. In addition,
 $\Delta_2'(\A, \D)$ can be bounded above by $\Delta_1'(\A, \D) = o(1)$ if the events are monotone.
The most innovative part of Corollary \ref{Cmain} is
the remaining assumption $\varphi(\A, \D) = o(1)$,
which is often easier to check and less restrictive than other mixing assumptions
known in the literature.
The detailed comparisons are given below.

First, we consider a stationary sequence of random variables.
If its distant terms are `weakly dependent',
then we can construct the graph $\D$
by connecting vertices that are close to each other.
Then, omitting some details, the following corresponds to
Leadbetter's mixing condition $D$:
\begin{align}
\left| \P{ \bigcap_{i \in I \cup J} \overline{ A_i } }
- \P{ \bigcap_{i \in I}  \overline{ A_i } } \P{ \bigcap_{i \in J} \overline{ A_i } } \right|
= o(1)
\label{alpha}
\end{align}
for all \red{disjoint} $I, J \subset [d]$ with no edges from $\D$ between them,
see \cite[Eq. \red{(1.2)}]{leadbetterself}.
Although,
(\ref{alpha})
looks similar to our assumption $\varphi(\A, \D) = o(1)$,
none of them not imply the other.
One advantage of our assumption in comparison with (\ref{alpha}) is that
one only needs to check the mixing condition for considerably \red{fewer} pairs of sets $I$ and $J$, namely for $I = [i-1]\setminus D_i$ and $J = \{i\}$ for all $i \in [d]$.
The same conclusion remains valid for the extensions of Leadbetter's mixing condition $D$
for non-stationary sequences and random fields on $\mathbb{Z}^2_+$,
see, H\"usler \cite[Theorem 1.1]{husler} and Pereira, Ferreira {\cite[Proposition 3.2]{pereiraferreira}}, respectively. In fact, our framework is much more flexible
since one can arbitrarily choose the graph $\D$,
without relying on the distances between indices.

Second, we consider the case when $\varphi(\A, \D) = 0$.
For this case, under some additional requirement,
Dubickas \cite[Theorem 1]{Dubickas} proved the following bound:
\begin{align}
\P{ \bigcap_{i \in [d]} \overline{ A_i } }
\ge \prod_{i\in [d]} \P{\overline{ A_i } } - \Delta_2(\A,\D).
\label{dub}
\end{align}
Thus, in this case, (\ref{dub})  gives the lower bound for $\P{ \bigcap_{i \in [d]} \overline{ A_i } }  $
similar to  Theorem \ref{Tmain}.
In the binomial subset setting and under condition $\Delta_1'(\A, \D) = o(1)$,
the matching upper bound for $\P{ \cap_{i \in [d]} \overline{ A_i } }$
can be derived from Janson's inequality \cite{janson1988exponential}.
Our graph-dependent  model is also related to the notions of lopsided (negative) dependency graph \cite{erdHos1991lopsided}
and $\epsilon$-near-positive dependency graph \cite{lu2009new}.
Those are models with one-sided  mixing conditions sufficient for the  lower and upper bounds respectively.

Next,
we compare Corollary \ref{Cmain}
with the results by Galambos \cite{galambos1972distribution, galambos1988variants}.
To our knowledge, he was the first to represent the weak and strong dependences among $(A_i)_{i \in [d]}$ by a graph. Galambos established  the extremal independence property (\ref{a_e_indep_formula})
using the so-called graph-sieve method; see, for example, \cite{galambos1996bonferroni} for detailed overview.
In particular,
Galambos' mixing assumptions require that,
\red{for a fixed graph $\D$,}
\begin{align}
\sum_{S} \left| \P{ \bigcap_{i \in S} A_i } - \prod_{i \in S} \P{ A_i } \right| = o(1),
\label{Gal}
\end{align}
where the sum in (\ref{Gal}) is over all $S \subseteq [d]$ with no edges of $\D$.
Assumption (\ref{Gal}) is very restrictive for many applications
since such set $S$ can be large.
For example, in some of the applications that we consider in Section \ref{rg_applications}, the graph $\D$ is empty so the results in \cite{galambos1972distribution, galambos1988variants}
is of little use, since assumption (\ref{Gal}) is equivalent to
the extremal independence property (\ref{a_e_indep_formula})
that we wish to establish.

To illustrate  the advantage of our approach with respect to the methods  of moments, we briefly consider the following example.  Let $A_i$,  where $i\in [d]$  and $d =  \binom{n}{h}$, to be the event  that the number of common neighbors of the corresponding $h$-subsets of vertices in $\mathcal{G}_{n,p}$ is greater than $a_n+b_n x$  (for some appropriately chosen $a_n,b_n$). In Section~\ref{S:max-h-neighbours}, we show that this system of events obey the asymptotic independence property
 (\ref{a_e_indep_formula_for_A}) despite the fact that the second moment of $Z = \sum_{i \in [d]} \mathbbm{1}(A_i)$   approaches  infinity when $p$ is a sufficiently large constant (depending on $h$).
 In fact, one can get around this difficulty  and modify the random variables so the second moment converges to the desired limit
 by conditioning  on  a certain event $\mathcal{E}_n$ that holds with probability $1-o(1)$. However, it does not help a lot  even for the third moment, and it is not evident that the convergence of the higher moments  can be established directly by a careful choice of random variables.

The \red{aforementioned} difficulty in applying the method of moments was also pointed out  by
Arratia, Goldstein and Gordon in~\cite{AGG}. Based on the Stein-Chen method they discovered  that the computation of two moments is sufficient for Poisson approximation under a certain
mixing condition for weakly dependent random variables.
For the rest of this section, we compare \cite[Theorem 3]{AGG} with our Theorem~\ref{Tmain} as these results have very similar setups.

Arratia et al.~\cite{AGG}  introduced another mixing coefficient different from our $\varphi$:
\begin{align}
	\widetilde\varphi  :=\sum_{i \in [d]} \P{A_i}\sum_{k=0}^d\left|\P{Z^i=k ~\middle|~ A_i}-\P{Z^i=k}\right|\notag,
\end{align}
where  $Z^i = \sum_{j \notin  D_i} \mathbbm{1}(A_i)$.   Their result \cite[Theorem 3]{AGG} states that 
\begin{equation}
	\left| \P{ \bigcap_{i \in [d]} \overline{ A_i } }
	- \prod_{i\in [d]} \P{\overline{ A_i } } \right|
	\leq 2\widetilde\varphi+4\Delta_1''+4\Delta_2''+4\sum_{i\in [d]}\big(\P{A_i}\big)^2,
	\label{eq:Arratia_bound}
\end{equation}
where
$$
\Delta_1''= \sum_{i\in[d]} \sum_{j \in D_i } \P{ A_i \cap A_j }\geq\Delta_1',\quad
\Delta_2''= \sum_{i\in[d]} \sum_{j \in D_i } \P{ A_i} \P{ A_j}\geq\Delta_2'.
$$

To compare $\widetilde{\varphi}$ with our mixing coefficient $\varphi$, we observe that
\begin{equation}
	\widetilde{\varphi} \geq\sum_{i\in [d]}\P{A_i}\cdot \left|\P{\bigcup_{j\notin D_i}A_j ~\middle|~ A_i}-\P{\bigcup_{j\notin D_i}A_j}\right|,
\label{eq:arratia}
\end{equation}
In the typical case
when $\sum_{i\in [d]}\P{A_i}=\Theta(1)$,  $\sum_{i \in [d]}(\P{A_i})^2=o(1)$
 (and up to ordering of vertices in $\D$)
the RHS of (\ref{eq:arratia}) has the same order of magnitude (or even bigger) as $\left(1 - \prod_{i\in [d]} \P{\overline{ A_i } } \right)\varphi$.
Thus, our bound is at least as efficient as \eqref{eq:Arratia_bound}  for such applications. 
\red{Moreover,  the lower bound  (\ref{eq:arratia})  on $\widetilde{\varphi} $ 
could be far from being  sharp, i.e. the actual value of the mixing coefficient $\widetilde{\varphi} $ could be much bigger.}
Furthermore,  Theorem~\ref{Tmain}  surpasses \cite[Theorem 3]{AGG} in several important instances listed below.

\begin{itemize}
	 \item[(1)] \textbf{Slowly decreasing $\sum_{i \in [n]} \big(\P{A_i}\big)^2$.} Clearly, Theorem~\ref{Tmain}  does not have this error term.
	 Thus, our results partially answer the question formulated by Arratia et al.~\cite{AGG} about the extremal independence property \eqref{a_e_indep_formula} in case when Poisson approximation is not good enough.
	
		\item [(2)]  \textbf{Slowly growing $\sum_{i \in [n]}\P{A_i}$.}
	    The term
	    $\left(1 - \prod_{i\in [d]} \P{\overline{A_i} } \right) \varphi$ has additional advantage for upper tail estimates where $\prod_{i\in [d]} \P{\overline{ A_i } }  \rightarrow 1$.

	\item[(3)] \textbf{Inhomogenous random graphs.}  For example, consider the random graph model with vertex set $[n]$ and independent adjacencies, where all adjacencies happen with probability $p$ excluding adjacencies incident to one special vertex. The edges incident to this vertex appear with a slightly higher probability $p'=\frac{a_n+b_nx}{n}=p+(1-o(1))\sqrt{\frac{2p(1-p)\ln n}{n}}$, where $a_n,b_n$ and constant $x\in\mathbb{R}$ are appropriately chosen.  Defining $A_i$ as the event that vertex $i$ in the considered random graph has degree more than $a_n+b_nx$, 
	 our inequality gives the upper bound $O(n^{-1/2})$ in (\ref{eq:Tmain}) while \cite[Theorem 3]{AGG}  gives a useless bound $O(1)$.


	\item[(4)] \textbf{Applications to Gaussian vectors.}  The assumptions of  Theorem~\ref{Tmain} can be verified directly using the Berman inequality;  Section~\ref{sec:gauss}. Combining (\ref{eq:Arratia_bound}) and the Berman inequality directly gives a bound which is  $2^{d-D}$ times  bigger, where $D = \max_{i\in D} |D_i|$. Note that $2^{d-D}$  can be very large if $\D$ is sparse.
	\end{itemize}



\subsection{Bridging sequences}\label{S:bridging}

Here, we state two helpful  lemmas  in applying
Theorem~\ref{Tmain} to study the extremal characteristics of  random combinatorial structures.
It will be convenient to \red{work with} non-scaled random variables $\{X_i\}$.
Everywhere in this section, we assume the following:
\begin{itemize}
	\item  $\X(n) = (X_1,\ldots,X_d)^T \in \Reals^d$ is a sequence of random vectors,   where  $d=d(n) \in \Naturals$;
	\item  $F$ is a continuous cdf on $\mathbb{R}$ and $\mathcal{X}$ is the set of all $x\in\mathbb{R}$ such that $0<F(x)<1$;
	\item there exist
	$a_n$ and $b_n$
	such that
	$\prod_{i=1}^d\P{X_i\leq a_n + b_n x }\to F(x)$
	for any $x \in \mathcal{X}$;
	\item for all $i \in [d]$, denote
	$A_i:=A_i(x)=\{X_i>a_n+b_n x\}$.
\end{itemize}

The first lemma shows that $\varphi(\A,\D)\to 0$ as $n\to\infty$ 
provided that, for all $i\in [d]$
and  $j\in [i-1]\setminus D_i$,
the random variables $X_j$ are approximated by   some random variables $X_j^{(i)}$, which are independent of $X_i$. We will use this lemma to derive the distribution of  the maximum  codegrees in random hypergraphs.

\begin{lemma}\label{L:mixing}
	Let $x\in \mathcal{X}$. Let  sets $D_i  \subseteq [d]\setminus\{i\}$ and  random variables $X_j^{(i)}$ be  such that, for all $j \in [i-1]\setminus D_i$, $X_j^{(i)}$ is independent of $X_i$   and,  for any fixed $\eps>0$,
	\begin{equation}
		\Pr\Big(\max_{ j \in [i-1]\setminus D_i} \left| X_j  - X_j^{(i)} \right|  >  \eps b_n \Big)
		= o(1) \Pr \(A_i\),
		\label{lem:eq:independent}
	\end{equation}
	uniformly over $i \in [d]$. Then $\varphi(\A,\D)\to 0$.
\end{lemma}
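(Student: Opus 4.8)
The plan is to fix $i \in [d]$ and bound the single term
\[
\left| \Pr\Big(\bigcup_{j \in [i-1]\setminus D_i} A_j \;\Big|\; A_i\Big) - \Pr\Big(\bigcup_{j \in [i-1]\setminus D_i} A_j\Big) \right|
\]
by $o(1)$ uniformly in $i$, and then take the maximum. Write $S := [i-1]\setminus D_i$ and introduce the surrogate events $A_j^{(i)} := \{X_j^{(i)} > a_n + b_n x\}$ for $j \in S$. The key observation is that, on the complement of the bad event $B_i := \{\max_{j\in S}|X_j - X_j^{(i)}| > \eps b_n\}$, the exceedances $A_j$ and $A_j^{(i)}$ cannot differ too wildly: if $X_j > a_n + b_n x$ then $X_j^{(i)} > a_n + b_n(x-\eps)$, and conversely. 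So I would sandwich $\bigcup_{j\in S} A_j$ between $\bigcup_{j\in S} A_j^{(i)}(x+\eps)$ and $\bigcup_{j\in S} A_j^{(i)}(x-\eps)$ up to the bad event $B_i$, where $A_j^{(i)}(y) := \{X_j^{(i)} > a_n + b_n y\}$. Concretely, $\big(\bigcup_S A_j\big) \setminus B_i \subseteq \bigcup_S A_j^{(i)}(x-\eps)$ and $\big(\bigcup_S A_j^{(i)}(x+\eps)\big)\setminus B_i \subseteq \bigcup_S A_j$.

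Next, since each $X_j^{(i)}$ with $j\in S$ is independent of $X_i$, the event $\bigcup_S A_j^{(i)}(y)$ is independent of $A_i$, so conditioning on $A_i$ does not change its probability. Using the sandwich both with and without conditioning on $A_i$, together with $\Pr(B_i \mid A_i) = \Pr(B_i \cap A_i)/\Pr(A_i) \le \Pr(B_i)/\Pr(A_i) = o(1)$ by hypothesis~\eqref{lem:eq:independent}, reduces the task to controlling the "elbow room" quantity
\[
\delta_i(\eps) := \Pr\Big(\bigcup_{j\in S} A_j^{(i)}(x-\eps)\Big) - \Pr\Big(\bigcup_{j\in S} A_j^{(i)}(x+\eps)\Big) \le \Pr\Big(\bigcup_{j\in S} \{a_n + b_n(x-\eps) < X_j^{(i)} \le a_n + b_n(x+\eps)\}\Big).
\]
Both the conditional and unconditional versions of $\Pr(\bigcup_S A_j)$ then lie within $\delta_i(\eps) + o(1)$ of the common value $\Pr(\bigcup_S A_j^{(i)}(x))$, so $\varphi(\A,\D) \le \sup_i \delta_i(\eps) + o(1)$.

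The main obstacle is showing $\sup_i \delta_i(\eps) \to 0$ and then sending $\eps \to 0$, since $\delta_i(\eps)$ is not assumed small directly. Here I would use the third bullet of the standing assumptions: $\prod_{i=1}^d \Pr(X_i \le a_n + b_n y) \to F(y)$ for $y \in \mathcal{X}$, and $F$ is continuous. Transferring from $X_j$ to $X_j^{(i)}$ via~\eqref{lem:eq:independent} (applied at thresholds $x\pm\eps$, which lie in $\mathcal X$ for $\eps$ small by continuity of $F$ and openness of $\mathcal X$), one gets that $\prod_{j\in S}\Pr(A_j^{(i)}(y))$ — hence $1 - \Pr(\bigcup_S A_j^{(i)}(y))$ when the surrogates are jointly independent, or more carefully a union-bound/inclusion-exclusion estimate otherwise — is close to a continuous function of $y$; the gap $\delta_i(\eps)$ is then bounded by the modulus of continuity of $F$ at $x$, uniformly in $i$, which $\to 0$ as $\eps\to0$. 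A standard $\eps$-then-$n$ diagonal argument finishes: for each $\eps$, $\limsup_n \varphi(\A,\D) \le \omega_F(\eps)$ where $\omega_F$ is the modulus of continuity of $F$ at $x$; letting $\eps \to 0$ gives $\varphi(\A,\D)\to 0$. The one subtlety to handle carefully is that~\eqref{lem:eq:independent} is only assumed for the single threshold $x$; I would note that the same surrogate variables $X_j^{(i)}$ work for nearby thresholds since $\{|X_j - X_j^{(i)}| > \eps b_n\}$ is monotone in $\eps$, so replacing $\eps$ by $\eps/2$ in the bad-event bound covers the shifted events $A_j^{(i)}(x\pm\eps)$ at the cost of only a constant factor.
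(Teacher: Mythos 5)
Your architecture is the same as the paper's: surrogate exceedance events at thresholds shifted by $\pm\eps$, independence of the surrogates from $A_i$, a single use of the bad event $B_i$ (which costs $o(1)$ even after conditioning, since $\Pr(B_i)\le o(1)\Pr(A_i)$), and a final $\eps\to 0$ via continuity of $F$. The genuine gap sits exactly at the step you yourself flag as the main obstacle: the claim $\sup_i\delta_i(\eps)\le\omega_F(\eps)+o(1)$. Your primary mechanism --- treating $1-\Pr\bigl(\bigcup_{j\in S}A_j^{(i)}(y)\bigr)$ as the product $\prod_{j\in S}\bigl(1-\Pr(A_j^{(i)}(y))\bigr)$ and comparing with $F$ --- is not available: the lemma only assumes each $X_j^{(i)}$ is independent of $X_i$, not that the surrogates are independent of one another; and even granting joint independence, the standing assumption controls the product over \emph{all} of $[d]$, not over the subset $S=[i-1]\setminus D_i$, so ``close to a continuous function of $y$'' does not follow (the subset product can be near $1$ regardless of $F$). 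The fallback you mention (``union bound / inclusion--exclusion'') is the right direction, but it is precisely where the work lies, and you have not supplied the ingredient that converts the product-type hypothesis $\prod_{i\in[d]}\Pr(X_i\le a_n+b_ny)\to F(y)$ into control of a \emph{sum} of individual probability increments.

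The paper closes this as follows: transfer the union back to the original variables at levels $x\pm2\eps$ using one application of the bad event (note this must be done at the union level; a term-by-term transfer would cost $|S|\cdot o(1)\Pr(A_i)$, which need not be $o(1)$), then apply the union bound to get $\sum_{j}\bigl(\Pr(A_j)-\Pr(A_j(x+2\eps))\bigr)$, and finally use the elementary inequality $\sum_j t_j\le \prod_j(1+t_j)-1$ with $t_j=\frac{\Pr(A_j)-\Pr(A_j(x+2\eps))}{1-\Pr(A_j)}$, which turns the sum into $\prod_j\frac{1-\Pr(A_j(x+2\eps))}{1-\Pr(A_j)}-1\to\frac{F(x+2\eps)}{F(x)}-1$; continuity of $F$ then gives the vanishing bound, and the lower bound is symmetric with $x-2\eps$. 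Without this sum-to-product conversion (or an equivalent, e.g.\ $a-b\le\log\frac{1-b}{1-a}$ and taking logarithms of the products), your modulus-of-continuity bound is unsubstantiated. Two minor remarks: your worry that \eqref{lem:eq:independent} is tied to the single threshold $x$ is moot, since the deviation event does not involve $x$ at all --- $x$ enters only through $\Pr(A_i)$ on the right-hand side, which is exactly the normalisation that $\varphi$ requires; and your statement that $\bigcup_{j\in S}A_j^{(i)}(y)$ is independent of $A_i$ tacitly uses joint independence of the family $(X_j^{(i)})_{j\in S}$ from $X_i$, a reading the paper's own proof also adopts and which holds in its applications.
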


The second lemma allows us to transfer the asymptotic distribution of the maximum component
of $\X(n)$  to any  random vector $\Y(n)\in \Reals^d $  that `approximates'  $\X(n)$.  Using this lemma, we will derive the distribution of the maximum clique-extension count  in random graphs from the   results on the maximum degree.

%
%

\begin{lemma}\label{L:transfer}	
	Let $\Y(n)\in \Reals^d $ be a  sequence of random vectors. Assume that, for any $x\in \mathcal{X}$,
	\begin{itemize}	
		\item[(i)] $
		\P { \max_{i\in[d]} X_i\leq a_n + b_n x }\to F(x);
		$
		\item[(ii)] for any fixed $\eps>0$,
		\[
		\Pr(|X_i - Y_i| > \eps b_n)  = o(1)\Pr( X_i> a_n + b_n x ),
		\]
		uniformly over all $i \in [d]$.	
	\end{itemize}
	Then
	%
	$
	\P { \max_{i\in[d]} Y_i\leq a_n + b_n x }\to F(x)
	$
	for all $x\in \mathcal{X}$.
	
\end{lemma}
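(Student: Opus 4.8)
The plan is a standard sandwiching argument resting on the elementary fact that, on the event where every coordinatewise discrepancy $|X_i-Y_i|$ is at most $\eps b_n$, the two maxima $\max_{i\in[d]}X_i$ and $\max_{i\in[d]}Y_i$ differ by at most $\eps b_n$. I would fix $x\in\mathcal X$ and a small $\eps>0$. Since $F$ is continuous and nondecreasing with $0<F(x)<1$, the set $\mathcal X$ is an open interval, so one may take $\eps$ small enough that $x-\eps,x+\eps\in\mathcal X$. Put $B_\eps:=\bigcup_{i\in[d]}\{|X_i-Y_i|>\eps b_n\}$ (here $b_n>0$, as in the FTG normalisation). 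On the complement of $B_\eps$ one has $\max_{i\in[d]}X_i-\eps b_n\le\max_{i\in[d]}Y_i\le\max_{i\in[d]}X_i+\eps b_n$, which gives the inclusions
\[
\Big\{\max_{i\in[d]}X_i\le a_n+b_n(x-\eps)\Big\}\setminus B_\eps\ \subseteq\ \Big\{\max_{i\in[d]}Y_i\le a_n+b_n x\Big\}\ \subseteq\ \Big\{\max_{i\in[d]}X_i\le a_n+b_n(x+\eps)\Big\}\cup B_\eps,
\]
and hence
\[
\P{\max_{i\in[d]}X_i\le a_n+b_n(x-\eps)}-\Pr(B_\eps)\ \le\ \P{\max_{i\in[d]}Y_i\le a_n+b_n x}\ \le\ \P{\max_{i\in[d]}X_i\le a_n+b_n(x+\eps)}+\Pr(B_\eps).
\]

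The only quantitative step is to show $\Pr(B_\eps)=o(1)$. By the union bound and hypothesis (ii), applied with this $\eps$ and this $x$, there is $\delta_n=\delta_n(\eps,x)\to0$ with $\P{|X_i-Y_i|>\eps b_n}\le\delta_n\,\P{X_i>a_n+b_n x}$ uniformly over $i\in[d]$, so that $\Pr(B_\eps)\le\delta_n\sum_{i\in[d]}\P{X_i>a_n+b_n x}$. To bound the last sum I would use the standing assumption $\prod_{i\in[d]}\P{X_i\le a_n+b_n x}\to F(x)>0$: for $n$ large all factors are positive, and taking logarithms together with $\log(1-t)\le-t$ yields $\sum_{i\in[d]}\P{X_i>a_n+b_n x}\le-\sum_{i\in[d]}\log\P{X_i\le a_n+b_n x}\to-\log F(x)<\infty$. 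Hence $\Pr(B_\eps)=o(1)$.

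Letting $n\to\infty$ in the second display, using hypothesis (i) at the points $x-\eps$ and $x+\eps$ (which lie in $\mathcal X$) and $\Pr(B_\eps)\to0$, I obtain
\[
F(x-\eps)\ \le\ \liminf_{n\to\infty}\P{\max_{i\in[d]}Y_i\le a_n+b_n x}\ \le\ \limsup_{n\to\infty}\P{\max_{i\in[d]}Y_i\le a_n+b_n x}\ \le\ F(x+\eps).
\]
Finally, letting $\eps\to0$ and invoking the continuity of $F$ gives $\P{\max_{i\in[d]}Y_i\le a_n+b_n x}\to F(x)$, which is the claim. I do not expect a genuine obstacle here: the argument is routine once one observes that controlling all the discrepancies $|X_i-Y_i|$ simultaneously controls the gap between the two maxima. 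The only points that need a little care are the two bookkeeping facts used above—that $\mathcal X$ is an interval, so the shifted points $x\pm\eps$ still lie in $\mathcal X$ and hypothesis (i) is available there, and that $\sum_{i\in[d]}\P{X_i>a_n+b_n x}$ stays bounded, which is precisely what makes the uniform $o(1)$ in (ii) usable.
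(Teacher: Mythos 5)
Your argument is correct and is essentially the paper's own proof: the same $\eps$-shift sandwich, the same use of hypothesis (ii) together with the union bound, the same $O(1)$ bound on $\sum_{i\in[d]}\Pr(X_i>a_n+b_nx)$ obtained from the logarithm of the product assumption, and the same appeal to continuity of $F$ at the end. The only cosmetic difference is packaging: you collect all discrepancies into the single bad event $B_\eps=\bigcup_{i\in[d]}\{|X_i-Y_i|>\eps b_n\}$ and compare the two maxima deterministically on its complement, whereas the paper compares $\Pr\bigl(\bigcup_{i\in[d]}\{Y_i>a_n+b_nx\}\bigr)$ with $\Pr\bigl(\bigcup_{i\in[d]}A_i^{\pm\eps}\bigr)$ through the per-index set differences $A_i^{\eps}\setminus B_i$ and $B_i\setminus A_i^{-\eps}$.
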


The  proofs of Lemma \ref{L:mixing} and  Lemma \ref{L:transfer}
require some standard technical calculations, which  we include  in appendix for completeness;
see Sections \ref{S:mixing} and \ref{S:transfer}.


\section{Probability of non-occurrence  of  events}
\label{sec:Janson_bounds}

In this section, we give new lower and upper bounds that allow to make a classification of dependencies between events 
 flexible and that do not require the implication from pairwise to mutual independence. Our bounds are follow-up to the inequalities of Arratia, Goldstein, Gordon~\cite{AGG} and give a certain improvement for applications in various settings (see Section~\ref{Sec:related}). However, the proofs are elementary and inspired by the proof of LLL. Note that our lower bound given in Section~\ref{Janson_lower} is a strict generalisation of Dubickas' inequality~\cite{Dubickas}.

\subsection{Upper bound}
\label{Janson_upper}



Here and in the next section, we use the notations $\Delta_1(\A,\D)$ and $\Delta_2(\A,\D)$ that are defined in (\ref{del1}) and (\ref{del2}) respectively.

\begin{lemma}\label{T:upper}
	Let $\varphi \geq 0$.
	If events $(A_i)_{i\in[d]}$ with non-zero probabilities and
	sets $( D_i \subset [d]\setminus\{i\} )_{i \in [d]}$ satisfy
	\begin{equation}\label{ass:phi}
		\P{ \bigcup_{j \in  [i-1] \setminus D_i} A_j    ~\middle|~ A_i}  - \P{ \bigcup_{j \in [i-1] \setminus D_i} A_j} \le  \varphi,
	\end{equation}
	for all  $i \in [d]$, then
	\begin{align} \label{Upp}
		\P{ \bigcap_{i \in [d]} \overline{ A_i } }  \leq
		\prod_{i\in [d]} \P{\overline{ A_i } }+\varphi\left(1-\prod_{i\in [d]} \P{\overline{ A_i } }\right) +  \Delta_1(\A,\D).
	\end{align}
	
\end{lemma}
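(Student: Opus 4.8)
The plan is to prove the upper bound by induction on $d$, peeling off the events one at a time in the reverse order $d, d-1, \dots, 1$, so that at each step the event being removed is $A_i$ and the "past" events $A_j$ with $j \in [i-1]$ have already been partitioned into a strongly dependent part ($j \in D_i$) and a weakly dependent part ($j \in [i-1]\setminus D_i$). Write $P_i := \P{\bigcap_{k \in [i]} \overline{A_k}}$ for the probability that none of the first $i$ events occurs, with $P_0 = 1$, and also abbreviate $q_i := \P{\overline{A_i}}$ and $B_i := \bigcup_{j \in [i-1]\setminus D_i} A_j$. The target inequality \eqref{Upp} is the statement about $P_d$, and I would actually prove the slightly stronger family of bounds
\begin{align*}
P_i \;\le\; \prod_{k \in [i]} q_k \;+\; \varphi\Bigl(1 - \prod_{k\in[i]} q_k\Bigr)\prod_{k \in [d]\setminus[i]} q_k \;+\; \sum_{m \le i} \P{A_m \cap \bigcup_{j \in [m-1]\cap D_m} A_j}\prod_{k \in [d]\setminus[m]} q_k,
\end{align*}
for all $0 \le i \le d$, which at $i = d$ reduces to exactly \eqref{Upp} and at $i = 0$ is the trivial $1 \le 1$.

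The inductive step from $i-1$ to $i$ is where the mixing hypothesis \eqref{ass:phi} enters. Condition on $A_i$ versus $\overline{A_i}$: the key identity is
\[
\P{\bigcap_{k\in[i]}\overline{A_k}} \;=\; \P{\bigcap_{k\in[i-1]}\overline{A_k}} \;-\; \P{A_i \cap \bigcap_{k\in[i-1]}\overline{A_k}}.
\]
The subtracted term I would bound below by writing $\bigcap_{k \in [i-1]}\overline{A_k} \supseteq \overline{B_i} \setminus \bigcup_{j\in[i-1]\cap D_i} A_j$, hence
\[
\P{A_i \cap \bigcap_{k\in[i-1]}\overline{A_k}} \;\ge\; \P{A_i \cap \overline{B_i}} \;-\; \P{A_i \cap \bigcup_{j\in[i-1]\cap D_i} A_j},
\]
and then use $\P{A_i \cap \overline{B_i}} = \P{A_i}\bigl(1 - \P{B_i \mid A_i}\bigr) \ge \P{A_i}\bigl(1 - \P{B_i} - \varphi\bigr)$ by \eqref{ass:phi}. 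Combining, $P_i \le P_{i-1} - \P{A_i}(1-\P{B_i}) + \P{A_i}\varphi + \P{A_i\cap\bigcup_{j\in[i-1]\cap D_i}A_j}$; then observe $P_{i-1} \le \P{\bigcap_{k\in[i-1]}\overline{A_k},\ \overline{B_i}} + \P{B_i}$ is not quite what I want, so instead I directly bound $P_{i-1} \le \P{\overline{B_i}\cap\bigcap_{k\in[i-1]\cap D_i}\overline{A_k}}$ trivially and massage $P_{i-1} - \P{A_i}(1-\P{B_i}) \le q_i P_{i-1} + \varphi(1-q_i)\cdot(\text{something}) $ — the precise algebraic rearrangement that converts $P_{i-1} - \P{A_i} + \P{A_i}\P{B_i}$ into $q_i P_{i-1} + (\text{error})$ is the routine-but-fiddly core; it uses $\P{A_i}\P{B_i} \le \P{A_i} \le 1-q_i$ and $P_{i-1} \ge \prod_{k\in[i-1]}q_k$ from the induction hypothesis's leading term to absorb the cross-terms. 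Plugging the induction hypothesis for $P_{i-1}$ into $q_i P_{i-1}$ and matching the three groups of terms then closes the induction.

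The main obstacle I anticipate is precisely this bookkeeping in the inductive step: tracking that the factor $\varphi$ picks up exactly the coefficient $\bigl(1-\prod_{i\in[d]}q_i\bigr)$ and not something larger, and that each declustering term $\P{A_m\cap\bigcup_{j\in[m-1]\cap D_m}A_j}$ acquires exactly the tail product $\prod_{k\in[d]\setminus[m]}q_k$. This forces a careful choice of which "stronger" inductive invariant to carry — the displayed one with the $\prod_{k\in[d]\setminus[i]}q_k$ factors attached — since the naive invariant (just \eqref{Upp} with $d$ replaced by $i$) does not propagate cleanly through the conditioning step. Once the right invariant is in hand, every inequality used is elementary (monotonicity of probability, the union-bound inclusion for $\bigcap\overline{A_k}$, and the single application of \eqref{ass:phi} per step), so no further ideas are needed beyond the induction itself.
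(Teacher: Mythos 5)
There is a genuine gap: the strengthened inductive invariant you commit to is false, so the induction you describe cannot close. Your one-step argument (when executed) yields the recursion
\[
\P{\bigcap_{k\in[i]}\overline{A_k}} \;\le\; \P{\overline{A_i}}\,\P{\bigcap_{k\in[i-1]}\overline{A_k}} \;+\; \varphi\,\P{A_i} \;+\; \P{A_i\cap\bigcup_{j\in[i-1]\cap D_i}A_j},
\]
in which the two newly created terms enter with coefficient $1$, not with the tail factor $\prod_{k\in[d]\setminus[i]}\P{\overline{A_k}}$ that your displayed invariant attaches to them, so the invariant does not propagate. Worse, it is simply false at intermediate steps: take $D_i=\varnothing$ for all $i$, let $A_1,A_2$ satisfy $\P{A_1\mid A_2}=\P{A_1}+\varphi$, and let $A_3,\dots,A_d$ be independent of everything with probabilities close to $1$ (all hypotheses of the lemma hold). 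Then $\P{\overline{A_1}\cap\overline{A_2}}=\P{\overline{A_1}}\P{\overline{A_2}}+\varphi\P{A_2}$, which exceeds your claimed bound $\P{\overline{A_1}}\P{\overline{A_2}}+\varphi\bigl(1-\P{\overline{A_1}}\P{\overline{A_2}}\bigr)\prod_{k\ge 3}\P{\overline{A_k}}$ as soon as $\prod_{k\ge 3}\P{\overline{A_k}}$ is small enough.

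Moreover, your stated reason for strengthening --- that the naive invariant ``does not propagate cleanly'' --- is exactly backwards. The naive invariant, i.e.\ \eqref{Upp} applied to the first $s$ events, which is $\prod_{i\in[s]}\P{\overline{A_i}}+\varphi\bigl(1-\prod_{i\in[s]}\P{\overline{A_i}}\bigr)+\sum_{i\in[s]}\P{A_i\cap\bigcup_{j\in[i-1]\cap D_i}A_j}\prod_{k\in[s]\setminus[i]}\P{\overline{A_k}}$, is precisely what the paper carries, and it passes through the recursion above exactly: multiplying it by $\P{\overline{A_{s+1}}}$ and adding $\varphi\P{A_{s+1}}$ turns the $\varphi$-coefficient $1-\prod_{i\in[s]}\P{\overline{A_i}}$ into $1-\prod_{i\in[s+1]}\P{\overline{A_i}}$, and the new declustering term enters with the correct (empty) tail product. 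Two smaller points: in your ``fiddly core'' the inequality actually needed to absorb the cross term is $\P{B_i}\le 1-\P{\bigcap_{k\in[i-1]}\overline{A_k}}$, which is immediate from $\bigcap_{k\in[i-1]}\overline{A_k}\subseteq\overline{B_i}$; the lower bound $\P{\bigcap_{k\in[i-1]}\overline{A_k}}\ge\prod_{k\in[i-1]}\P{\overline{A_k}}$ you invoke is neither supplied by your (upper-bound) induction hypothesis nor true in general. Your derivation of the one-step recursion itself, via the inclusion $\overline{B_i}\cap\overline{C_i}\supseteq\overline{B_i}\setminus C_i$ and one application of \eqref{ass:phi}, is fine and is essentially an equivalent, slightly more direct form of the paper's conditional-probability computation; the flaw lies solely in the choice of invariant and the unexecuted bookkeeping built on it.
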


\begin{proof}
	Let us prove that, for every $s\in[d]$,
	\begin{align} \label{Upp_induction}
		\P{ \bigcap_{i \in [s]} \overline{ A_i } }  \leq
		(1-\varphi )\prod_{i\in [s]} \P{\overline{ A_i } } +
		\varphi + \sum_{i\in [s]}
		\P{ A_i\cap \bigcup_{j \in [i - 1] \cap D_i }A_j} \prod_{k\in [s]\setminus[i]} \P{\overline{ A_k } }
	\end{align}
	by induction on $s$. The required bound~(\ref{Upp}) is exactly (\ref{Upp_induction}) when $s=d$.\\
	
	For $s=1$, (\ref{Upp_induction}) follows from $\varphi\geq 0$.  Assume that (\ref{Upp_induction}) holds for some $s\in[d-1]$.  Let
	\begin{equation}
		B:= \bigcup_{j \in [s]\setminus D_{s+1} } A_j,
		\qquad C:= \bigcup_{j \in [s] \cap D_{s+1} }A_j.
		\label{def:B_and_C}
	\end{equation}
	Note that
	\begin{equation}\label{eq:simple}
		1 -  \P{\overline{ A_{s+1} } \m \bigcap_{i \in [s]} \overline{ A_i } }
		=  \P{A_{s+1} \m \overline B \cap \overline C}
		\geq \P{A_{s+1} ~\middle|~ \overline{B}}  (1 - \P{C ~\middle|~  A_{s+1}\cap \overline{B}}).
	\end{equation}
	By \eqref{ass:phi}, we have $\P{ \overline{B} \m A_{s+1}} \geq  \P{ \overline{B}} - \varphi$. Therefore,
	\[
	\P{A_{s+1}\m \overline{B}}
	= 	\frac{\P{ \overline{B} \m A_{s+1}}}{\Pr(\overline{B})}    \Pr(A_{s+1})
	\geq  \left(1 -  \frac{\varphi}{\P{\overline{B}}}\right) \Pr(A_{s+1}).
	\]
	We also find that
	\[
	\P{ C\m A_{s+1}  \cap  \overline{B}} = \frac{	 \P{ C\cap \overline B ~\middle|~ A_{s+1}  } }{
		\P{ \overline{B}  ~\middle|~ A_{s+1}}}
	\le   \frac{ \P{ C \m A_{s+1}} }{ \P{\overline{B}} - \varphi}.
	\]
	Using the above two bounds  in \eqref{eq:simple}, we  derive that
	\begin{align*}
		\P{\overline{ A_{s+1} } \m \bigcap_{i \in [s]} \overline{ A_i } } \leq
		1 -   \left(1 -  \frac{\varphi}{\P{\overline{B}}}\right) \Pr(A_{s+1}) + \frac{ \P{ A_{s+1} \cap C} }{ \P{\overline{B}}}.
	\end{align*}
	Then, since  $\P{\overline{B}} \geq  \P{\bigcap_{i \in [s]} \overline{ A_i }}  $, we get
	\[
	\P{\bigcap_{i \in [s+1]} \overline{ A_i }}
	\leq    \P{\bigcap_{i \in [s]} \overline{ A_i }} \P{\overline{A_{s+1}}} + \varphi \Pr(A_{s+1}) + \Pr(A_{s+1}\cap C).
	\]
	By (\ref{Upp_induction}), we have
	\begin{align*}
		\P{\bigcap_{i \in [s+1]} \overline{ A_i }}
		&\le (1-\varphi )\prod_{i\in [s+1]} \P{\overline{ A_i } } + \varphi  \\
		&\quad+ \sum_{i\in [s+1]} \P{ A_i\cap \bigcup_{j \in [i - 1] \cap D_i }A_j}
		\prod_{k\in [s+1]\setminus[i]} \P{\overline{ A_k } }.
	\end{align*}
	This completes the proof.
\end{proof}

\subsection{Lower bound}
\label{Janson_lower}

\begin{lemma}[Generalised Dubickas' inequality]\label{Dubicas}
Let $\varphi \geq 0$.
If events $(A_i)_{i\in[d]}$ with non-zero probabilities and
sets $D_i \subset [d]\setminus\{i\}$ satisfy
\begin{equation}\label{ass:phi2}
\P{ \bigcup_{j \in [i-1] \setminus D_i} A_j}
- \P{ \bigcup_{j \in  [i-1] \setminus D_i} A_j    ~\middle|~ A_i}
\le  \varphi,
\end{equation}
for all  $i \in [d]$, then
\begin{align}
\P{ \bigcap_{i \in [d]} \overline{A_i} }
&\ge
\prod_{i \in [d]} \P{ \overline{ A_i } }-\varphi\left(1-\prod_{i \in [d]} \P{ \overline{ A_i } }\right) - \Delta_2(\A,\D).
\label{lowerB1}
\end{align}
\label{Tlower}
\end{lemma}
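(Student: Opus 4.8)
The plan is to mirror the structure of the proof of Lemma~\ref{T:upper}, running an induction on a prefix of the events, but now producing a lower bound instead of an upper one. Concretely, I would prove by induction on $s\in[d]$ that
\begin{align*}
\P{ \bigcap_{i \in [s]} \overline{A_i} }
\ge (1-\varphi)\prod_{i\in[s]} \P{\overline{A_i}} - \varphi
- \sum_{i\in[s]} \P{A_i}\,\P{\bigcup_{j\in[i-1]\cap D_i} A_j}\prod_{k\in[s]\setminus[i]}\P{\overline{A_k}},
\end{align*}
and then specialise to $s=d$; the sum on the right is exactly $\Delta_2(\A,\D)$ (truncated to $[s]$), and the leading terms rearrange into the claimed bound since $(1-\varphi)\prod\P{\overline{A_i}} - \varphi = \prod\P{\overline{A_i}} - \varphi(1-\prod\P{\overline{A_i}})$. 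The base case $s=1$ is immediate from $\varphi\ge0$ and $\P{\overline{A_1}}\le 1$.

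For the inductive step, set $B:=\bigcup_{j\in[s]\setminus D_{s+1}} A_j$ and $C:=\bigcup_{j\in[s]\cap D_{s+1}} A_j$ exactly as in \eqref{def:B_and_C}. Again $1-\P{\overline{A_{s+1}}\mid\bigcap_{i\in[s]}\overline{A_i}} = \P{A_{s+1}\mid \overline B\cap\overline C}$, but now I need an \emph{upper} bound on this conditional probability. I would use $\P{A_{s+1}\mid\overline B\cap\overline C}\le \P{A_{s+1}\cap\overline C\mid\overline B}/\P{\overline C\mid\overline B}$ is awkward; cleaner is to write $\P{A_{s+1}\mid\overline B\cap\overline C}\le \P{A_{s+1}\mid\overline B} + \P{C\mid\overline B\cap A_{s+1}}$ via the elementary inclusion $\{A_{s+1}\}\cap\overline C \subseteq \{A_{s+1}\}$ relative to $\overline B$; more directly, $\P{A_{s+1}\cap\overline C\mid\overline B}\le\P{A_{s+1}\mid\overline B}$ and $\P{A_{s+1}\cap\overline C\mid\overline B}\ge \P{A_{s+1}\mid\overline B}-\P{A_{s+1}\cap C\mid\overline B}$. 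Then assumption \eqref{ass:phi2} gives $\P{\overline B\mid A_{s+1}}\le\P{\overline B}+\varphi$, hence $\P{A_{s+1}\mid\overline B} = \P{\overline B\mid A_{s+1}}\P{A_{s+1}}/\P{\overline B}\le (1+\varphi/\P{\overline B})\P{A_{s+1}}$, and $\P{A_{s+1}\cap C\mid\overline B}\le \P{A_{s+1}\cap C}/\P{\overline B}\le \P{A_{s+1}}\P{C}/\P{\overline B}$ after dropping the $\overline B$ in the numerator (using $\P{A_{s+1}\cap C}\le\P{A_{s+1}}\P{C}$ is false in general — instead just bound $\P{A_{s+1}\cap C}\le\P{A_{s+1}}$, or keep it as $\P{A_{s+1}\cap C}$ and note $\le \P{A_{s+1}}\P{C} + (\text{covariance})$; the clean route is to not split and instead directly estimate, matching the $\Delta_2$ term which has the product form $\P{A_i}\P{\bigcup A_j}$). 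Assembling, and using $\P{\overline B}\ge\P{\bigcap_{i\in[s]}\overline{A_i}}$ in the denominators so that dividing by $\P{\overline B}$ and multiplying back by $\P{\bigcap_{i\in[s]}\overline{A_i}}$ loses nothing, I get
\[
\P{\bigcap_{i\in[s+1]}\overline{A_i}} \ge \P{\bigcap_{i\in[s]}\overline{A_i}}\,\P{\overline{A_{s+1}}} - \varphi\,\P{A_{s+1}} - \P{A_{s+1}}\P{C},
\]
and then plugging in the induction hypothesis for $\P{\bigcap_{i\in[s]}\overline{A_i}}$ and noting $\prod_{k\in[s]\setminus[i]}\P{\overline{A_k}}\cdot\P{\overline{A_{s+1}}} = \prod_{k\in[s+1]\setminus[i]}\P{\overline{A_k}}$ for $i\le s$, and that the new $i=s+1$ term is exactly $\P{A_{s+1}}\P{C}$, closes the induction.

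The main obstacle I anticipate is getting the inequality directions and the handling of the intersection with $\overline C$ exactly right so that the error accumulates as the product-form $\Delta_2$ rather than something larger: in the upper-bound lemma the corresponding term was $\P{A_i\cap\bigcup_{j} A_j}$ giving $\Delta_1$, whereas here it must come out as $\P{A_i}\P{\bigcup_j A_j}$ giving $\Delta_2$, which forces one to bound $\P{A_{s+1}\cap C}$ (or its conditional version on $\overline B$) by $\P{A_{s+1}}\P{C}$ — this is the delicate point, since it is not true unconditionally and one must instead route through $\P{A_{s+1}\cap C\mid\overline B}\le\P{A_{s+1}\cap C}/\P{\overline B}$ and then argue that the $1/\P{\overline B}$ factor is absorbed when multiplying by $\P{\bigcap_{i\in[s]}\overline{A_i}}\le\P{\overline B}$, after which the remaining $\P{A_{s+1}\cap C}$ is dominated within the telescoping sum by $\P{A_{s+1}}\P{C}\prod_{k\in[s+1]\setminus[i]}\P{\overline{A_k}}$ using $\prod\P{\overline{A_k}}\le1$. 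Once the bookkeeping is set up so that every multiplicative factor of $\P{\overline B}^{-1}$ is cancelled against $\P{\bigcap_{i\in[s]}\overline{A_i}}$, the rest is routine rearrangement exactly as in the upper bound.
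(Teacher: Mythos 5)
Your overall skeleton (induction on $s$, the same split $B,C$ as in \eqref{def:B_and_C}, and the use of \eqref{ass:phi2} through $\P{\overline B\mid A_{s+1}}\le\P{\overline B}+\varphi$) is the paper's, and the intermediate inequality $\P{\bigcap_{i\in[s+1]}\overline{A_i}}\ge\P{\bigcap_{i\in[s]}\overline{A_i}}\P{\overline{A_{s+1}}}-\varphi\P{A_{s+1}}-\P{A_{s+1}}\P{C}$ that you assert is exactly the one the paper establishes and then iterates. But the step that actually produces the product term $\P{A_{s+1}}\P{C}$ --- which is what makes the error come out as $\Delta_2$ rather than $\Delta_1$ --- is missing from your argument, and the routes you sketch for it do not work. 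Peeling off $\P{A_{s+1}\cap C\mid\overline B}$ (or $\P{C\mid\overline B\cap A_{s+1}}$) inevitably leaves you with the joint probability $\P{A_{s+1}\cap C}$, and there is no general inequality $\P{A_{s+1}\cap C}\le\P{A_{s+1}}\P{C}$ (as you note yourself); your fallback, that $\P{A_{s+1}\cap C}$ is ``dominated within the telescoping sum by $\P{A_{s+1}}\P{C}\prod_{k}\P{\overline{A_k}}$ using $\prod\P{\overline{A_k}}\le 1$,'' is backwards, since multiplying by a factor at most $1$ only makes the right-hand side smaller; with positively correlated events (say $A_j\subseteq A_{s+1}$ for all $j\in[s]\cap D_{s+1}$) one has $\P{A_{s+1}\cap C}=\P{C}>\P{A_{s+1}}\P{C}$. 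The missing idea is that $C$ should never be intersected with $A_{s+1}$ at all: bound $\P{A_{s+1}\cap\overline B\cap\overline C}\le\P{A_{s+1}\cap\overline B}=\P{\overline B\mid A_{s+1}}\P{A_{s+1}}\le\bigl(\P{\overline B}+\varphi\bigr)\P{A_{s+1}}$, and then decompose the \emph{marginal} $\P{\overline B}=\P{\overline B\cap\overline C}+\P{\overline B\cap C}\le\P{\bigcap_{i\in[s]}\overline{A_i}}+\P{C}$. Dividing by $\P{\overline B\cap\overline C}=\P{\bigcap_{i\in[s]}\overline{A_i}}$ and multiplying back, the term $\P{A_{s+1}}\P{C}$ appears as a product of marginals automatically, with no correlation inequality and no absorption of $1/\P{\overline B}$ factors needed.

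A second, smaller problem: your induction hypothesis carries $(1-\varphi)\prod_{i\in[s]}\P{\overline{A_i}}-\varphi$, and the identity you invoke, $(1-\varphi)\prod\P{\overline{A_i}}-\varphi=\prod\P{\overline{A_i}}-\varphi\bigl(1-\prod\P{\overline{A_i}}\bigr)$, is false: the right-hand side equals $(1+\varphi)\prod\P{\overline{A_i}}-\varphi$. As written, your induction would only give the weaker conclusion $\prod_{i\in[d]}\P{\overline{A_i}}-\varphi\bigl(1+\prod_{i\in[d]}\P{\overline{A_i}}\bigr)-\Delta_2(\A,\D)$, not \eqref{lowerB1}. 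The repair is to strengthen the hypothesis to $(1+\varphi)\prod_{i\in[s]}\P{\overline{A_i}}-\varphi-\sum_{i\in[s]}\P{A_i}\P{\bigcup_{j\in[i-1]\cap D_i}A_j}\prod_{k\in[s]\setminus[i]}\P{\overline{A_k}}$, which is what the paper proves: the base case still holds since $\varphi\P{A_1}\ge0$, and the same inductive step closes because $-\varphi\P{\overline{A_{s+1}}}-\varphi\P{A_{s+1}}=-\varphi$.
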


\begin{proof}

Let us prove that, for every $s\in[d]$,
\begin{equation} \label{Lower_induction}
\P{ \bigcap_{i \in [s]} \overline{A_i} }\ge (1 + \varphi) \prod_{i \in [s]} \P{ \overline{ A_i } }
- \varphi-\sum_{i\in[s]} \P{ A_i} \P{ \bigcup_{j \in [i - 1] \cap D_i }A_j}
\prod_{k \in [s]\setminus[i]} \P{\overline{A_k} }
\end{equation}
by induction on $s$. The required bound~(\ref{lowerB1}) is exactly (\ref{Lower_induction}) when $s=d$.

For $s=1$, (\ref{Lower_induction}) is straightforward since $\varphi\geq 0$. Assume that (\ref{Lower_induction}) holds for $s\in[d-1]$. Consider the events $B$ and $C$ defined in (\ref{def:B_and_C}). Then
\begin{align*}
\P{ \overline{A_{s+1}} \m \bigcap_{i \in [s]} \overline{A_i} }
= 1 - \frac{ \P{ A_{s+1} \cap \overline{B} \cap \overline{C} } }{ \P{\overline{B} \cap \overline{C}} }
\ge 1 - \frac{ \P{ \overline{B} \m A_{s+1} }  }{ \P{\overline{B} \cap \overline{C}} } \P{ A_{s+1} }.
\end{align*}
From (\ref{ass:phi2}), we have $\Pr(\overline{B}|A_{s+1})\leq\Pr(\overline{B})+\varphi$. Therefore,
\begin{align}
\P{ \overline{A_{s+1}} \m \bigcap_{i \in [s]} \overline{A_i} }
\ge 1 - \frac{ \P{ \overline{B} } + \varphi  }{ \P{\overline{B} \cap \overline{C}} } \P{ A_{s+1} }.
\label{opp1}
\end{align}
Moreover,
\begin{align}
\P{\overline{ B } }
= \P{\overline{ B } \cap \overline{ C } } + \P{\overline{ B } \cap C}
& \le \P{ \bigcap_{i \in [s]} \overline{A_i} } + \P{C}.
\label{opp2}
\end{align}
Combining (\ref{Lower_induction}),~(\ref{opp1})~and~(\ref{opp2}), we get
\begin{align*}
\P{\bigcap_{i \in [s+1]} \overline{A_i}}
&\ge \P{\bigcap_{i \in [s]} \overline{A_i}} \P{\overline{A_{s+1}}} - \varphi \Pr(A_{s+1}) -
\P{ \bigcup_{j \in [s]\cap D_{s+1}}A_j } \Pr(A_{s+1}) \\
&\ge (1 + \varphi) \prod_{i \in [s+1]} \P{ \overline{ A_i } } - \varphi
- \sum_{i\in[s+1]} \P{ A_i} \P{ \bigcup_{j \in [i - 1] \cap D_i }A_j}
\prod_{k \in [s+1]\setminus[i]} \P{\overline{A_k} }.
\end{align*}
This completes the proof.
\end{proof}

\begin{remark}
	As mentioned in Section \ref{Sec:related},
the special case of  (\ref{lowerB1}) with $\varphi = 0$
proves Dubickas' inequality (\ref{dub}).
Note also that  our condition
\[\P{ \bigcup_{j \in [i-1] \setminus D_i} A_j}
\le \P{ \bigcup_{j \in  [i-1] \setminus D_i} A_j ~\middle|~ A_i},
\]
  is weaker than the Dubickas' requirement on the connection between pairwise and mutual independencies.

\end{remark}


%
%

\section{Applications in Gaussian systems}
\label{sec:gauss}

In this section we assume that $\X(n)$ is a Gaussian vector for all $n\in \mathbb{N}.$ Our main purpose is to provide conditions under which this system satisfies the assumptions of Theorem \ref{Tmain}.

Assume first that \red{under some linear normalisation,
$\X(n)$} is the first $n$ random variables of a given sequence $\{X_n\}_{n\ge 1}$. In i.i.d. case, it is well-known that the distribution function of the maximum of this sequence under a specific linear normalisation tends to a standard Gumbel law, that is, for all $x\in \mathbb{R}$,
\begin{equation}\P{\max_{i\in [n]}X_i \le a_n + b_n x} \to e^{-e^{-x}} \label{gumbel_normal}\end{equation}
for some non-random sequences $\{a_n\}$ with $a_n>0, n\in \mathbb{N},$ and $\{b_n\}.$ 
But \red{does} the relation \eqref{gumbel_normal} hold if $\{X_n\}_{n\ge 1}$ are not independent or/and identically distributed? Berman \cite{berman} showed that \eqref{gumbel_normal} remains true for stationary Gaussian sequence $\{X_n\}_{n\ge 1}$ under the following remarkable condition
\begin{equation} r(n) \log n \to 0,\label{origin_berman}\end{equation} where $r(n)$ is a covariance function of $\{X_n\}_{n\ge 1}.$ It turns out, that the Berman condition \eqref{origin_berman} is necessary and sufficient in some sense for Gaussian stationary sequence $\{X_n\}_{n\ge 1}$ to satisfy \eqref{gumbel_normal} with the same normalising sequences $\{a_n\}$ and $\{b_n\}$ as in i.i.d. case. Indeed, Mittal and Ylvisaker \cite{mittal} discovered that if $r(n) \log n \to \gamma,$ then the probability in \eqref{gumbel_normal} converges to a convolution of the standard Gumbel and some Gaussian distribution, thus the property (\ref{a_e_indep_formula}) does not hold in this case.

Next, H\"usler \cite{husler1} found the conditions under which the relation \eqref{a_e_indep_formula} is fulfilled for non-stationary Gaussian sequence $\{X_n\}_{n\ge 1}.$ One of the conditions imposed by H\"usler was the modified Berman condition \[\sup_{|i-j|>n} \rho(i,j) \log n \to 0,\] where $\rho(i,j)$ is a correlation function of the sequence $\{X_n\}_{n\ge 1}.$ The property (\ref{a_e_indep_formula}) for non-stationary Gaussian random fields on $\mathbb{Z}^2_+$ with mean zero and unit variance was treated in \cite{Choi} and \cite{pereira}. In \cite{jakubowski}, for stationary Gaussian random field on $\mathbb{Z}^k$, the property (\ref{a_e_indep_formula}) was proved under some multivariate modification of the Berman condition \eqref{origin_berman}. It is interesting that violation of this Berman condition for stationary Gaussian random field in at least one direction in $\mathbb{Z}^k$ entails violation of (\ref{a_e_indep_formula}), see \cite{jakubowski+}.\\

Let us now switch to our {\it most general} case when the $i$-th component of $\X(n)$ depends on $n$. Assume that $d=d(n)$ and $\prod_{i=1}^d\P{X_i \le a_n + b_n x}\to F(x)>0$ for any fixed $x\in\mathbb{R}.$ For every $n\in\mathbb{N}$ and $i,j\in[d]$, set $r_{ij}=r_{ij}(n) = \frac{{\rm cov}(X_i,X_j)}{\sqrt{{\rm Var} X_i{\rm Var} X_j}}$.
Denote $u_i=u_i(n)=\frac{a_n + b_n x-\E{X_i}}{\sqrt{{\rm Var} X_i}}$ and $u_{\min}(n)=\min_{i\in[d]}u_i$.

\begin{theorem}\label{Gauss}
\red{Assume that} for every $x\in \mathbb{R}$, 
there is a graph $\D=\D(n,x)$ such that the Gaussian system $\X(n)$ satisfies the following conditions.
\begin{itemize}
\item[(G1)] $\liminf_n u_{\min}(n) > 1.$
\item[(G2)] $\max_{i\in [d]} \max_{j\in [i-1]\backslash D_i} |r_{ij}| \log d\to 0$.
\item[(G3)] $\sum_{i\in [d]}\sum_{j\in [i-1]\cap D_i} \exp\Big(-\frac{u_{i}^2 + u_{j}^2}{2(1 + |r_{ij}|)}\Big) \to 0$.
\item [(G4)]  $\limsup_n\max_{i\ne j} |r_{ij}|< \rho$ for some fixed
$\rho \in (0,1)$.
\end{itemize}
Then $\P{\max_{i\in [d]}X_i \le a_n + b_n x} \to F(x)$.\\
\end{theorem}

We prove Theorem~\ref{Gauss} later in this section, but, first, we compare  it with the previously known results.  \\


Theorem \ref{Gauss} implies the results of \cite{berman},  \cite{jakubowski} and \cite{pereira} mentioned above.  Indeed, the Berman-type condition (G2) is more general than the corresponding conditions in these works. The condition (G4) is fulfilled for stationary sequence in \cite{berman} and stationary field in \cite{jakubowski} and is assumed in \cite{pereira}. At last, the specific choice of the sets $\{D_i\}$ (all of them should have the same form and size) with application of the Berman-type condition guarantees the fulfillment of (G3).  It  is also straightforward to derive  \cite[Theorem 4.1]{husler1}  from our Theorem~\ref{Gauss} if $u_{\min} = \Omega(\sqrt{\log d}).$

Next,  we compare the conditions (G1)--(G4) with the conditions in the above mentioned result of H\"usler \cite{husler1}.

\begin{itemize}
\item In contrast to \cite{husler1}, we do not require that $u_{\min}(n)$ tends to the endpoint of the limit distribution function $F$ (which is infinity for the Gumbel distribution) 
\red{but only need} the weaker condition (G1).

\item (G2) is a Berman-type condition 
\red{whose analogue} was also used by H\"usler. Next, it is easy to see that the condition (G3) follows from a more convenient assumption
$$
\exp\Big(-\frac{(u_{\min}(n))^2}{1 + \rho}\Big)\sum_{i\in[d]} |[i-1] \cap D_i|\to 0.
$$
This condition (together with (G2)) is more flexible than the conditions being imposed on Gaussian sequences and fields in the literature. The sets of indices $\{D_i\}$ may not be intervals in \red{the} one-dimensional case, in contrast to \cite{berman}, \cite{husler1}, 
and \red{may be} neither parallelepipeds nor spheres in the multi-dimensional case, in contrast to the choice of corresponding sets in \cite{pereira} and \cite{jakubowski} respectively. Moreover, the form of $|D_i|$ can strongly depend on $i.$

\item Finally, (G4) was also assumed by H\"usler.
\end{itemize}

\subsection{Proof of Theorem~\ref{Gauss}}
Set $\widetilde X_i=\frac{X_i-\E{X_i}}{\sqrt{\mathrm{Var} X_i}}.$ Let us check that the conditions of Corollary \ref{Cmain} 
\red{hold} for \[A_i=\left\{\frac{X_i - a_n}{b_n}>x\right\} = \{\widetilde X_i>u_i\},\quad i\in[d].\] This will immediately give the statement of Theorem~\ref{Gauss}.\\

First, recall the well-known relation for standard normal $\eta$ (see, for example, \cite[Proposition 2.4.1]{piterbarg})

\begin{equation}
\frac{1}{\sqrt{2\pi} x} e^{-x^2/2} (1 - x^{-2}) \le \P{\eta>x} \le \frac{1}{\sqrt{2\pi} x} e^{-x^2/2}, \quad x>0.\label{normal_bounds}
\end{equation}
Here and in what follows, $C$
denotes a positive constant whose value is large enough (it may be different in different places --- we use the same notation to avoid introducing many letters or indices).
Using the upper bound from \eqref{normal_bounds}, we observe that
\begin{equation} \label{for_A3}
\P{A_i}\P{A_j} \le C \frac{1}{u_{i}u_{j}} \exp\left(-\frac{u_{i}^2+u_{j}^2}{2}\right) \le C\exp\left(-\frac{u_{i}^2+u_{j}^2}{2(1 + |r_{ij}|)}\right).
\end{equation} Therefore, (G3) implies that the assumption $\Delta^\prime_2(\A, \D) = o(1)$ and hence the assumption $\Delta_2(\A, \D) = o(1)$ are fulfilled.\\

Now, we have the following trivial bound for the sum in $\Delta^\prime_1(\A, \D)$
\[
\sum_{i\in [d]}\sum_{j\in[i-1]\cap D_i} \P{A_i\cap A_j} \le \sum_{i\in [d]}\sum_{j\in[i-1]\cap D_i} \Big(\P{A_i}\P{A_j} + d(i,j)\Big),
\]
where
\[d(i,j) = \left|\P{\widetilde X_i \le u_{i}, \widetilde X_j \le u_{j}} - \P{\widetilde X_i \le u_{i}} \P{\widetilde X_j \le u_{j}}\right|.\] The latter follows from the relation
\begin{equation}
|\P{A\cap B} - \P{A}\P{B}| = |\P{\overline{A}\cap \overline{B}} - \P{\overline{A}} \P{\overline{B}}|.
\label{from_complements_to_events}
\end{equation}
Direct application of the famous Berman inequality (cf. Theorem 4.2.1 \cite{leadbetter_book}) gives us
\begin{equation} d(i,j) \le C\frac{|r_{ij}|}{1 - r_{ij}^2} \exp\left(-\frac{u_{i}^2 + u_{j}^2}{2(1 + |r_{ij}|)}\right). \label{d(i,j)} \end{equation}
Therefore, we easily obtain by (G4), \eqref{for_A3} and \eqref{d(i,j)}
\begin{eqnarray*}
\sum_{i\in [d]}\sum_{j\in[i-1]\cap D_i} \Big(\P{A_i}\P{A_j} + d(i,j)\Big) & \le & C \sum_{i\in [d]}\sum_{j\in[i-1]\cap D_i} \exp\left(-\frac{u_{i}^2+u_{j}^2}{2(1 + |r_{ij}|)}\right),
\end{eqnarray*}
and the right-hand side vanishes by (G3). Thus, we verified the assumption $\Delta^\prime_1(\A, \D)=o(1)$ and hence the assumption $\Delta_1(\A, \D)=o(1);$ it remains to justify $\varphi(\A,\D)=o(1)$.\\

By definition \eqref{phi} and using \eqref{from_complements_to_events} again, we have
\begin{eqnarray*}\nonumber \varphi(\A, \D) &=& \max_{i\in[d]} \frac{1}{\P{A_i}}\Big|\P{\cup_{j\in [i-1]\backslash D_i}A_j \cap A_i} - \P{\cup_{j\in [i-1]\backslash D_i}A_j}\P{A_i}\Big|\\
& = & \max_{i\in [d]}\frac{1}{\P{A_i}}\Big|\P{\cap_{j\in [i-1]\backslash D_i}\overline{A_j} \cap \overline{A_i}} - \P{\cap_{j\in [i-1]\backslash D_i}\overline{A_j}}\P{\overline{A_i}}\Big| \label{first_relation}
\end{eqnarray*}
Applying the Berman inequality again, we get
\begin{equation*}
 \left|\P{\bigcap\limits_{j\in [i-1]\backslash D_i}\overline{A_j} \cap \overline{A_i}} - \P{\bigcap\limits_{j\in [i-1]\backslash D_i}\overline{A_j}}\P{\overline{A_i}}\right| \le C \sum_{j\in [i-1]\backslash D_i}\frac{|r_{ij}|}{1 - r_{ij}^2} \exp\left(-\frac{u_{i}^2 + u_{j}^2}{2(1 + |r_{ij}|)}\right).
\end{equation*}
Thus, by (G1), (G2), (G4) and \eqref{normal_bounds}, we obtain
\begin{eqnarray} \nonumber
\varphi(\A, \D)
& \le & C \max_{i\in [d]} \left(\frac{u_{i}^3}{u_{i}^2-1}e^{u^2_{i}/2} \sum_{j\in [i-1]\backslash D_i} \frac{|r_{ij}|}{1 - r_{ij}^2} \exp\left(-\frac{u_{i}^2 + u_{j}^2}{2(1 + |r_{ij}|)}\right)\right)\\
\nonumber
&\le&C \max_{i\in [d]} \left(u_{i}e^{u^2_{i}/2} \sum_{j\in [i-1]\backslash D_i}|r_{ij}|\exp\left(-(1 - |r_{ij}|)\frac{u_{i}^2 + u_{j}^2}{2}\right)\right) \\
&=& C \max_{i\in[d]}\left(u_{i} \sum_{j\in [i-1]\backslash D_i}|r_{ij}|	\exp\left(-\frac{u_{j}^2}{2} + |r_{ij}|\frac{u_{i}^2+ u_{j}^2}{2}\right)\right). \label{for_A4}
\end{eqnarray}

\red{ First let us assume that} $u_{i}/\sqrt{\log d} > 2\sqrt{2}+\delta$ for some $\delta>0,$ then $\exp(-u_{i}^2/4) = o(d^{-2}).$
Therefore,
$$
\sum_{j\in [i-1]} \exp\Big(-\frac{u_{i}^2 + u_{j}^2}{2(1 + |r_{ij}|)}\Big)\leq
\sum_{j\in [i-1]} \exp\Big(-\frac{u_{i}^2}{4}\Big) = o(d^{-1}),
$$
and the sum in the left-hand side does not affect the asymptotic of the double sum in (G3).
\red{Let us  change the graph $\D$ so that} $D_i = [d]$ and derive that
\[\Big|\P{\cup_{j\in [i-1]\backslash D_i}A_j \cap A_i} - \P{\cup_{j\in [i-1]\backslash D_i}A_j}\P{A_i}\Big|=0.\]

\red{Now let us instead assume that} $u_{i}/\sqrt{\log d}\leq 2\sqrt{2}+o(1)$. If $u_j/\sqrt{\log d}\geq \frac{3(1+4\rho)}{1-\rho}$  for $j\in [i-1]\backslash D_i$, then
$$
\exp\left(-\frac{u_{j}^2}{2} + |r_{ij}|\frac{u_{i}^2+ u_{j}^2}{2}\right)=o\left(\frac{1}{d\log d}\right).
$$
Therefore, we may assume that
\begin{equation}
\max_{j\in [i-1]\backslash D_i} u_{j} \le \frac{3(1+4\rho)}{1-\rho} \sqrt{\log d}.
\label{max_u}
\end{equation}
Hence, by (G2), \[|r_{ij}| \frac{u_{i}^2+u_{j}^2}{2} = o(1)\] uniformly over $i\in[d]$ and $j\in [i-1]\backslash D_i$. Using the latter, (G1), (G2), \eqref{normal_bounds}, and \eqref{max_u}, we derive for the right-hand side of \eqref{for_A4}
\[ \max_{i\in[d]} \sum_{j\in [i-1]\backslash D_i} u_{i}|r_{ij}| \exp\Big(-\frac{u_{j}^2}{2} + |r_{ij}|\frac{u_{i}^2 + u_{j}^2}{2}\Big)  =  \max_{i\in [d]}\sum_{j\in [i-1]\backslash D_i}  u_{i}|r_{ij}| \exp\Big(-\frac{u_{j}^2}{2} + o(1)\Big)\]
\[ \le C \max_{i\in [d]}\sum_{j\in [i-1]\backslash D_i} u_{i} u_{j}|r_{ij}| \P{A_j}
 \le  C  \max_{i\in [d]}\Big(\max_{j\in [i-1]\backslash D_i} |r_{ij}|\log d \sum_{j\in [i-1]\backslash D_i} \P{A_j} \Big)\to 0,\]
where the last relation holds since
$$
\sum_{j\in[d]}\P{A_j}\leq-\sum_{j\in[d]}\log(1-\P{A_j})\to-\log F(x)<\infty.
$$
The result follows.\quad $\Box$\\

\section{Applications in random graphs}
\label{rg_applications}



Let us recall that $\mathcal{G}_{n,p}$ is a random graph on the vertex set $[n]=\{1,\ldots,n\}$  distributed as
\[
 \Pr(\mathcal{G}_{n,p}=G)=p^{e(G)}(1-p)^{{n\choose 2}-e(G)},
 \]
 where $e(G)$ is the number of edges of a graph $G$  with the vertex set $[n]$  (i.e., every pair of distinct vertices of $[n]$ is adjacent with probability $p$ independently of all the others).

In~\cite{bollobas}, Bollob\'{a}s proved that, for $p=\mathrm{const}$, the maximum degree $\varDelta$ of $\mathcal{G}_{n,p}$ after appropriate rescaling converges to Gumbel distribution. More formally, there exist sequences $a_n$ and $b_n$ (the exact values are known) such that $\frac{\varDelta-a_n}{b_n}$ converges in distribution to a standard Gumbel random variable. Ivchenko proved~\cite{ivchenko} that the same holds for $p$ such that $p(1-p)\gg\frac{\log n}{n}$. In other words, for the rescaled degree sequence of $\mathcal{G}_{n,p}$, the extremal independence property~(\ref{a_e_indep_formula}) holds. This is not unexpected since the dependence of degrees of two vertices of the random graph is `focused' in the only edge between these vertices. 
In Section~\ref{max_degree_result}, we show that Theorem~\ref{Tmain} implies the same result for the maximum degree of
binomial random {\it hypergraph} that can not be obtained by the approach of Bollob\'{a}s and Ivchenko directly.

The results of Bollob\'{a}s and Ivchenko can be viewed as a particular case of the following problem suggested by Spencer in~\cite{spencer_extensions}. Let $G$ be a graph, and $H$ be its subgraph on $h$ vertices. Define $d(H,G)=\frac{|E(G)|-|E(H)|}{|V(G)|-|V(H)|}$ (here, as usual, $V(G)$ and $E(G)$ are the set of vertices and the set of edges of $G$ respectively). Let the pair $(H,G)$ be {\it strictly balanced} and {\it grounded} i.e.,
\begin{itemize}
\item for every $S$ such that $H\subsetneq S\subsetneq G$, \red{we have} $d(H,S)<d(H,G)$, \red{and}

\item there is an edge between $V(H)$ and $V(G)\setminus V(H)$ in $G$.
\end{itemize}

For brevity, we \red{denote by} $[n]_h$ and ${[n]\choose h}$ the set of all $h$-tuples of distinct vertices from $[n]$ and the set of all $h$-subsets of $[n],$ respectively. 
For an $h$-tuple $\mathbf{x}=(x_1,\ldots,x_h)\in[n]_h$, \red{denote by} $X_{\mathbf{x}},$ the number of {\it $(H,G)$-extensions} of $\mathbf{x}$ in $\mathcal{G}_{n,p}$ (i.e., the number of copies of $(V(G),E(G)\setminus E(H))$ in $\mathcal{G}_{n,p}$ in which each vertex $v_j$, $j\in[h]$, of $H$ maps onto $x_j$). For example, the degree of a vertex $u$ equals $X_u$ when $h=1$ and $G=K_2$ (as usual, we denote by $K_r$ a complete graph on $r$ vertices and call it an {\it $r$-clique}). Spencer raised the question about the deviation of $X_{\mathbf{x}}$ from its expectation and proved that
\begin{equation}
\frac{\max_{\mathbf{x}\in[n]_h}|X_{\mathbf{x}}-\mu| }{\mu}\stackrel{\Pr}\rightarrow 0
\label{extensions_lln}
\end{equation}
whenever $\mu:=\E{X_{(1,\ldots,h)}}=\Theta\left(n^{|V(G)|-|V(H)|}p^{|E(G)|-|E(H)|}\right)\gg\log n$. In Section~\ref{max_clique_result}, we show that Theorem~\ref{Tmain} results in a tight lower bound of a possible denominator in the law of large numbers~(\ref{extensions_lln}) for a slightly more narrow range of $p$ and some specific strictly balanced and grounded $(H,G)$. More precisely, for $h=1$ and $G$ being a clique (its size may depend on $n$), we prove that $\max_{u\in[n]}X_u$ after appropriate rescaling converges to Gumbel distribution. Moreover, as we discuss in Sections~\ref{S:max-h-neighbours}~and~\ref{extensions_further}, these techniques can be applied for $h>1$ as well.

\subsection{Maximum degree and codegree  in hypergraphs}
\label{max_degree_result}

Let $\HG_{n, k, p}$ be the $k$-uniform binomial random hypergraph with the vertex set $[n]$.
Recall that  every $k$-set from ${[n] \choose k}$ appears as an edge in $\HG_{n, k, p}$  with probability $p$ independently.
For a set $S\subseteq[n]$ with $|S|< k$ let
$X_S$ be  the codegree of
 $S$  in $\HG_{n, k, p}$ (i.e., the number of edges of $\HG_{n, k, p}$  containing $S$). In particular,  $X_i$ is the degree of a vertex $i$. Note that
\begin{equation}\label{D-dist}
	X_S \sim \Bin\left(\binom{n-|S|}{k-|S|}, p\right).
\end{equation}
In this section, using Theorem \ref{Tmain},
we show that, 
\red{under some assumptions on the parameters $k$ and $p$ (in terms of $n$)},
the asymptotic distribution of  $\max_S X_S$   is the same as if the  variables $X_S$ were independent. For independent random variables, the asymptotic distribution is given by
the following lemma.
\begin{lemma}\label{L:bin}
Let   $d=d(n)\in \Naturals$, $N=N(n) \in \Naturals$, and
$p=p(n)\in (0,1)$   satisfy
\[
	     Np(1-p) \gg \log^3 d \gg 1.  
\]
If $\xi_1,\ldots,\xi_d$ are $\Bin(N,p)$ independent random variables, then $\left[\max_{i\in[d]}\xi_i-a_n\right]/b_n$ converges in distribution to a standard Gumbel random variable with  $a_n$ and $b_n$ defined by
 \begin{equation}\label{def:ab}
 \begin{aligned}
 	 a_n&= a_n(d, N, p):= pN +\sqrt{2 Np(1-p)  \log d}\left(1 - \frac{\log\log d}{4\log d} - \frac{\log( 2 \sqrt \pi)}{2\log d}\right),
 \\
 	b_n &= b_n(d, N,p):= \sqrt{\frac{ Np(1-p) }{2 \log d} }.
 \end{aligned}
 \end{equation}
%
 \end{lemma}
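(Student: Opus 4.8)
The plan is to apply the classical extreme-value argument for independent (but non-identically distributed, since here they are i.i.d. but the normalisation must be tracked carefully) random variables: show that for each fixed $x\in\Reals$ the sequence $u_n := u_n(x) = a_n + b_n x$ satisfies $d\,\P{\xi_1 > u_n} \to e^{-x}$, and then conclude via the standard identity $\P{\max_i \xi_i \le u_n} = \(1-\P{\xi_1>u_n}\)^d = \exp\(d\log(1-\P{\xi_1>u_n})\) \to e^{-e^{-x}}$, which is legitimate once $\P{\xi_1 > u_n}=o(1)$ uniformly. So everything reduces to a sharp estimate of the upper tail of $\Bin(N,p)$ at the point $u_n$, which lies roughly $\sqrt{2Np(1-p)\log d}$ above the mean $pN$; the condition $Np(1-p)\gg\log^3 d$ ensures the deviation is $o\((Np(1-p))^{2/3}\)$, placing us comfortably in the regime where a local CLT / normal approximation with error control is valid.

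First I would reduce to the Gaussian tail. Writing $\sigma^2 = Np(1-p)$ and $t = (u_n - pN)/\sigma$, so that $t = \sqrt{2\log d}\,(1+o(1))$, I want $\P{\xi_1 > u_n} = (1+o(1))\,\bar\Phi(t)$ where $\bar\Phi$ is the standard normal tail, \emph{uniformly} over the relevant range of $t$. This is the heart of the matter and I expect it to be the main obstacle: one needs a quantitative normal approximation for the binomial tail valid at moderate deviations of order $\sqrt{\log d}$ standard deviations, with a \emph{multiplicative} $(1+o(1))$ error, not just an additive one. The cleanest route is to invoke a known moderate-deviation theorem (Bahadur–Rao / Cramér-type, or the local limit theorem of e.g.\ Bollob\'as's book or a direct Stirling-formula computation on $\P{\Bin(N,p) = k}$ summed over $k > u_n$): since $t = o(\sigma^{1/3}) = o\((Np(1-p))^{1/6}\)$ — guaranteed precisely by $\sigma^2\gg\log^3 d$ — the third-order term in the Cramér series is $o(1)$ and the binomial tail equals $\bar\Phi(t)\exp(o(1)) = (1+o(1))\bar\Phi(t)$. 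One must check uniformity over $t$ ranging up to $C\sqrt{\log d}$; this follows because all error terms are bounded by fixed powers of $t/\sigma^{1/3} \to 0$.

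Next I would verify that the normalising constants $a_n, b_n$ in \eqref{def:ab} are exactly those that make $d\,\bar\Phi(t) \to e^{-x}$. Using the Mills-ratio asymptotics $\bar\Phi(t) = \frac{1}{\sqrt{2\pi}\,t}e^{-t^2/2}(1+O(t^{-2}))$ from \eqref{normal_bounds}, take logarithms: $\log\(d\,\bar\Phi(t)\) = \log d - \tfrac12 t^2 - \log t - \tfrac12\log(2\pi) + o(1)$. Substituting $t = t(x) = \sqrt{2\log d}\(1 - \frac{\log\log d}{4\log d} - \frac{\log(2\sqrt\pi)}{2\log d}\) + \frac{x}{\sqrt{2\log d}}$ — which is exactly $(a_n + b_n x - pN)/\sigma$ by the definitions of $a_n,b_n$ — one expands $t^2$ and $\log t$ and checks that the $\log d$, $\log\log d$ and constant terms all cancel, leaving $\log\(d\,\bar\Phi(t)\) \to -x$. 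This is a routine but slightly delicate Taylor expansion: $t^2 = 2\log d - \log\log d - \log(4\pi) + 2x + o(1)$ and $\log t = \tfrac12\log\log d + \tfrac12\log 2 + o(1)$, and indeed $\log d - \tfrac12 t^2 - \log t - \tfrac12\log(2\pi) = \log d - \log d + \tfrac12\log\log d + \tfrac12\log(4\pi) - x - \tfrac12\log\log d - \tfrac12\log 2 - \tfrac12\log(2\pi) + o(1) = -x + o(1)$, using $\tfrac12\log(4\pi) = \tfrac12\log 2 + \tfrac12\log(2\pi)$. Combining this with the tail-comparison step gives $d\,\P{\xi_1>u_n}\to e^{-x}$, and since this forces $\P{\xi_1>u_n}=o(1)$, the product identity above yields $\P{\max_i\xi_i \le a_n+b_nx}\to e^{-e^{-x}}$, i.e.\ convergence to the standard Gumbel law. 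The condition $\log^3 d\gg 1$ is used only to guarantee $a_n, b_n$ are well-defined with $\log\log d$ meaningful and that $t\to\infty$, so that \eqref{normal_bounds} applies.
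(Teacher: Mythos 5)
Your proposal is correct, and its skeleton is the same as the paper's: reduce everything to showing $d\,\Pr(\xi_1 > a_n + b_n x) \to e^{-x}$ for each fixed $x$, then pass to the maximum via $\bigl(1 - (e^{-x}+o(1))/d\bigr)^d \to e^{-e^{-x}}$. Where you genuinely differ is in how the tail estimate is obtained. The paper splits into cases and cites external results: \cite{nadarajah2002} for $p$ bounded away from $0$ and $1$ (which there gives the Gumbel limit directly), Ivchenko's Lemmas 4 and 5 \cite{ivchenko} for $p\to 0$ with $p \gg \log^3 d/N$, and a de Moivre--Laplace estimate \cite{bollobas2001random} for $1-p = o(1)$. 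You instead give a unified argument covering all $p$ in the stated range: a Cram\'er-type moderate-deviation comparison $\Pr(\xi_1 > u_n) = (1+o(1))\,\bar\Phi(t)$, valid because $t \asymp \sqrt{2\log d} = o\bigl((Np(1-p))^{1/6}\bigr)$ under the hypothesis $Np(1-p) \gg \log^3 d$, followed by the explicit expansion showing that the constants in \eqref{def:ab} force $\log\bigl(d\,\bar\Phi(t)\bigr) \to -x$; I checked that expansion ($t^2 = 2\log d - \log\log d - \log(4\pi) + 2x + o(1)$, $\log t = \tfrac12\log\log d + \tfrac12\log 2 + o(1)$) and the cancellation is correct. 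Your route buys self-containedness, a single argument for all regimes of $p$, and a transparent explanation of why the exponent $3$ in $\log^3 d$ is the right one; its cost is that the moderate-deviation estimate must hold for a triangular array with $p = p(n)$ possibly tending to $0$ or $1$, so the classical Cram\'er theorem for a fixed summand distribution does not apply verbatim and you need a binomial-specific (Stirling-based) statement --- which is essentially what the paper's three citations supply. Since you flag exactly this point and name appropriate sources, there is no gap.
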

 \begin{proof}
  For $p$ bounded away from $0$ and $1$ we refer to   \cite[Theorem 3]{nadarajah2002}.   
   For $p \rightarrow 0$, $p\gg\frac{\log^3 d}{N}$,  we find by   \cite[Lemmas 4 and 5]{ivchenko} that
   \begin{equation}\label{eq:intermediate}
   		d \Pr(\xi_1 > a_n + b_nx) \rightarrow e^{-x}.
   \end{equation}
    Since $d\rightarrow \infty$, 
    \[
    		\Pr\left(\max_{i\in [d]} \xi_i \leq a_n + b_n x \right)
    		=  \big( \Pr(\xi_1  \leq a_n + b_nx)\big)^d
    		= \left(1- \frac{e^{-x}+o(1)}{d}\right)^d \rightarrow e^{-e^{-x}}.
    \]
    Finally, if $\frac{\log^3 d}{N}\ll 1-p=o(1)$, then \eqref{eq:intermediate} can be obtained similarly    by applying  de Moivre--Laplace theorem (see, e.g., \cite[Theorem 1.6]{bollobas2001random}). 
 \end{proof}

\begin{remark}
In fact,   Lemma  \ref{L:bin} can be extended to  the range $Np(1-p) \gg \log d$.
This  would  involve a more complicated expression for $a_n$, while  $b_n$ remains the same,  see  \cite[Lemma 5]{ivchenko}. However, such a generalisation is not needed for the applications we consider.
\end{remark}

In the next theorem, we show that, 
\red{under certain assumptions},
the maximum degree  in     the random hypergraph  $\HG_{n, k, p}$ converges to the Gumbel distribution.

\begin{theorem}\label{max_degree_hyper}
Assume $p=p(n)\in(0,1)$ and $k=k(n)\in\{2,\ldots,n\}$ are such that
\begin{equation}  \binom{n-1}{k-1}p(1-p) \gg \log^3 n, \qquad k  \ll n/\log^2 n. \label{cond_le3.1}
\end{equation}
Then $\left[\max_{i\in[n]}X_i-a_n\right]/b_n$ converges in distribution to a standard Gumbel random variable, where
$a_n = a_n\left(n,  \binom{n-1}{k-1} ,p\right)$ and $b_n = b_n\left(n,  \binom{n-1}{k-1} ,p\right)$ are
defined in \eqref{def:ab}.

%
%
\end{theorem}

\begin{proof}
Take any $x\in \Reals$. For all $i\in[n]$, let $A_i:= \{X_i > a_n + xb_n \}$.
Let $d:=n$  and $N :=\binom{n-1}{k-1}$. By  
Lemma \ref{L:bin},  we find that
\begin{equation}\label{eq:F}
\prod_{i \in [n]} \Pr( \overline{A}_i)\rightarrow  e^{-e^{-x}}.
\end{equation}
For $i\in[d]$, let $D_i=\varnothing$. Then $\Delta_1(\A, \D) = \Delta_2(\A, \D)=0$.
By \purple{Corollary~\ref{Cmain}},  we  only need to show that
$\varphi(\A,\D)=o(1)$.
We employ Lemma \ref{L:mixing}   to verify  it.
 Note that $F(x):= e^{-e^{-x}}$ is the cdf of the standard Gumbel distribution. In particular, $F$ is continuous and $0<F(x)<1$ for all $x\in\Reals$. To \red{apply} Lemma \ref{L:mixing}, it remains to construct random variables $X_j^{(i)}.$ For $j\in[d]\setminus i$, define $X_j^{(i)}:= \E{X_j ~\middle|~ \obj_i}$,  where $\obj_i$ is the set of edges of $\HG_{n, k, p}$ that does not contain the vertex $i$. Clearly,
$X_j^{(i)}$ is independent of $X_i$ because the  random set $\obj_i$ is independent of  $X_i$. Recalling that $X_{i,j}:=X_{\{i,j\}} \sim \Bin\left( \binom{n-2}{k-2}, p\right)$ is the number of edges of $\HG_{n, k, p}$  containing both $i$ and $j$, we get
\begin{equation}\label{eq:X-X}
	 X_j - X_j^{(i)} =  X_{i,j} - \E{X_{i,j} ~\middle|~ \obj_i} =  X_{i,j} - \E{X_{i,j}}.
\end{equation}

Next, we estimate the probability that $|X_{i,j} - \E{X_{i,j}}| > \eps b_n$.  Here, without loss of the generality, we may assume that $p\leq \frac12$. Otherwise, we can consider the random variable
$\binom{n-2}{k-2} - X_{i,j} \sim\Bin\left( \binom{n-2}{k-2}, 1-p\right)$ and repeat the arguments. By the assumptions, we get that $ b_n =  \sqrt{\dfrac{\binom{n-1}{k-1} p(1-p) }{2 \log n} }$ satisfies
\begin{align*}
	  b_n \gg \log n \qquad \text{and} \qquad   \frac{b_n^2}{\E{X_{i,j}}} =
	  \frac{\binom{n-1}{k-1}(1-p)}{ 2 \binom{n-2}{k-2} \log n} \gg \log n.
\end{align*}
Applying the Chernoff bound 
\red{(see, for example, \cite[Theorem 2.1]{JLR})}, we find that, for any fixed $\eps>0$,
\begin{equation}\label{D-conc}
	\Pr\left(|X_{i,j} - \E{X_{i,j}}| > \eps b_n\right)
	\leq 2 \exp \left(- \frac{(\eps b_n)^2}{2 \E{X_{i,j}} +\eps b_n}\right)
	 = e^{-\omega(\log n)}.
\end{equation}

Combining \eqref{eq:X-X}, \eqref{D-conc} and applying the union bound for all $j \in [i-1]$, we get that
\[
	 \Pr\Big(\max_{ j \in [i-1]} \left| X_j  - X_j^{(i)} \right|  >  \eps b_n \Big)
	 \leq n  e^{-\omega(\log n)} =e^{-\omega(\log n)}  .
\]
From \eqref{eq:F}, we find that $ \Pr(X_i > a_n + b_n x)= \Omega(n^{-1}) \gg e^{-\omega(\log n)}$ uniformly over all $i\in [n]$. Thus, we get the desired $X_j^{(i)}$ satisfying all conditions of Lemma \ref{L:mixing}. This completes the proof.
\end{proof}

\begin{remark}
The binomial random graph $\mathcal G_{n, p}$ is a special case of $\HG_{n, k, p}$ for $k = 2$.
In the particular case, Theorem~\ref{max_codegree_hyper}
gives the asymptotic distribution  of the maximum degree of $\mathcal G_{n, p}$.  This result  was obtained for the first time by Bollob\'as~\cite{bollobas}  and Ivchenko~\cite{ivchenko} using the method of moments. 
For every $i\in[n]$, they consider the Bernoulli random variable $\eta_i$ that equals 1 if and only if its degree is bigger than $a_n+b_n x$. Letting $\eta=\eta_1+\ldots+\eta_n$, they easily get that $\E\eta\to e^{-x}$ as $n\to\infty$. 
Thus, it is  sufficient to prove that $\eta$ converges in distribution to a Poisson random variable as $n\to\infty$. For $k=2$, one can derive that $\E{\eta\choose r}\to e^{-xr}/r!$ for any fixed  $r\in\mathbb{N}$.   However,
when $k>2$ the dependencies are stronger so the computation of factorial moments becomes much more technically involved. In contrast, our method does not require any computations aside from the single application of the Chernoff bound in (\ref{D-conc}) for all $k$.
%
%
\end{remark}

\begin{remark}
Another advantage of our approach is that it gives an estimate of the rate of convergence to the Gumbel distribution. A careful investigation of the proofs of  Theorem~\ref{Tmain},  Lemma  \ref{L:mixing},
and Theorem \ref{max_degree_hyper} shows that
\begin{align*}
	\biggl|\Pr\left(\max_{i\in [n]} X_i \leq a_n + x b_n\right) - \prod_{i \in [n]}\Pr&\left( X_i\leq a_n + x b_n\right)\biggr|
	\\ &= O\left( \sqrt{\frac{\log^3 n}{\binom{n-1}{k-1} p(1-p)}} +  \sqrt{\frac{k \log^2 n} {n} } \right).
\end{align*}
That is, the rate of convergence is governed by the rate of decrease of $\eps$, for which
$ \Pr\Big(\max_{ j \in [i-1]} \left| X_j  - X_j^{(i)} \right|  >  \eps b_n \Big)$ remains very small.
In addition, for $a_n$ and $b_n$ defined by \eqref{def:ab},  the convergence rate of  $ \prod_{i \in [n]}\Pr\left( X_i\leq a_n + x b_n\right)$ to the Gumbel distribution  is  $O\left(\frac{\log\log n}{\log n}\right)$.
However, this convergence rate can be improved by  using a more precise expression for the scaling parameter $a_n$. 
%
\end{remark}

Our approach  applied to  codegrees $X_S$ in the random hypergraph $\HG_{n, k, p}$ leads  to the following result.

\begin{theorem}\label{max_codegree_hyper}
Assume $p=p(n)\in(0,1)$,
$s =s(n) \in [n-1]$, and
$k=k(n)\in [n]\setminus [s]$ are such that
\begin{equation*}
\binom{n-s}{k-s}p(1-p) \gg s^3 \log^3 n, \qquad (k-s)s^2  \ll (n-s)/\log^2 n.
\end{equation*}
Then $\left[\max_{S\in{[n]\choose s}}X_S-a_n\right]/b_n$ converges in distribution to a standard Gumbel random variable, where
$a_n = a_n\left(\binom{n}{s},  \binom{n-s}{k-s} ,p \right)$ and $b_n = b_n\left(\binom{n}{s},  \binom{n-s}{k-s},
p \right)$ are defined in \eqref{def:ab}.

\end{theorem}
\begin{proof}
	Theorem \ref{max_codegree_hyper} is proved in exactly the same way as
 Theorem \ref{max_degree_hyper}.
 Take any $x\in \Reals$. For all $S\in \binom{[n]}{s}$, let $A_{S}:= \{X_{S} > a_n + xb_n \}$.
Let $d:=\binom{n}{s}$ and  $N :=\binom{n-s}{k-s}$.  Since  $\binom{n}{s} \leq n^s$,
 the assumptions imply $Np(1-p) \gg \log^3 d$.
Recalling  that  $X_{S} \sim \Bin\left(N, p \right) $ and using Lemma~\ref{L:bin},  we find that
\begin{equation*}
\prod_{S \in \binom{[n]}{s}} \Pr( \overline{A}_{S})\rightarrow  e^{-e^{-x}}.
\end{equation*}
Again, we can take $D_S=\varnothing$ for all $S\in{[n]\choose s}$. Thus, we only need to show that $\varphi(\A,\D)=o(1)$. The key fact needed to apply  Lemma~\ref{L:mixing} is that, for any fixed $\eps>0$,
\begin{equation*}
 	\Pr \left( \big|X_{U\cup S} - \E{X_{U\cup S}}\big| \leq \eps b_n   \text{ for all distinct $U,S \in \binom{[n]}{s}$} \right)
 	\geq 1- e^{-\omega(\log d)}.
 \end{equation*}
 Similarly to \eqref{D-conc}, this is a straightforward application of the Chernoff bound.
 \end{proof}

\subsection{Maximum clique-extension counts}
\label{max_clique_result}

Let $k\geq 3$ be an integer. In this section, we find the \purple{asymptotic distribution} 
of the maximum number of $k$-clique extensions in the random graph $\mathcal G_{n, p}$. 
For $i\in[n]$, let $X_i$ be the number of $k$-cliques containing \red{vertex} $i$. 
Below, we show that Theorem~\ref{Tmain} implies the \purple{asymptotic} distribution of the maximum value of $X_i$ over $i\in[n]$.


Let
\begin{align*}
a^k_n  & :=\frac{(pn)^{k-2}p^{{k-1\choose 2}}}{(k-1)!}\left[pn+(k-1)\sqrt{2np(1-p)\log n}\left(1 - \frac{\log\log n}{4\log n} - \frac{\log( 2 \sqrt \pi)}{2\log n}\right)\right],\\
b^k_n & := \frac{1}{(k-2)!}(pn)^{k-2}p^{k-1\choose 2}\sqrt{\frac{ np(1-p) }{2 \log n} }.
\end{align*}

\begin{theorem}
Let $p=p(n)\in(0,1)$, $k=k(n)\in\{3,\ldots,n\}$ be such that
\begin{equation}
\label{log_bound}
\log^3 n=o(np(1-p)),\quad \log^2 n=o\left(\frac{np^{{k-1\choose 2}+1}(1-p)}{k^3}\right).
\end{equation}
Then $\left[\max_{i\in[n]}X_i-a_n^k\right]/b_n^k$ converges in distribution to a standard Gumbel random variable.  
\label{ext thm}
\end{theorem}
\begin{proof}
Let $d_i$ be the degree of the vertex $i$, and $Y_i = \E{ X_i ~\middle|~ d_i }= \binom{d_i}{k-1} p^{\binom{k-1}{2}}$. Note that
\[
\max_{i \in [n]} Y_i = \binom{\max_{i \in [n]} d_i}{k-1} p^{\binom{k-1}{2}}.
\]
\red{Let $x\in\mathbb{R}$. 
By Theorem~\ref{max_degree_hyper}, we have
\begin{align*}
\P{ \max_{ i \in [n]}
Y_i \le \binom{a_n + b_n x} {k - 1} p^{ \binom{k-1}{2} }}
= \P{ \max_{i \in [n]} d_{i} \le a_{n} + b_{n}x }
\to e^{-e^{-x}},
\end{align*}
where $a_n = a_n(n, n - 1, p)$ and $b_n = b_n(n, n - 1, p)$ are defined in \eqref{def:ab}. Computing directly, we get
$$
\binom{a_n+b_n x}{k-1} p^{ \binom{k-1}{2}}=a^k_n+b^k_n x (1+o(1)),
$$ 
implying that 
\begin{equation}
\P{ \max_{i \in [n]} Y_i \leq  a^k_n + b^k_n x } \to e^{-e^{-x}}.
\label{cond_expect_Gumbel}
\end{equation}
}
Set $\widetilde X_i=\frac{X_i-a^k_n}{b^k_n}$, $\widetilde Y_i=\frac{Y_i-a^k_n}{b^k_n}$. It remains to show that
\begin{align}
\P{ \left| \widetilde X_i - \widetilde Y_i \right| > \eps } = \o{ \P{ Y_i > a_n^k + x b_n^k } }=\o{\frac{1}{n}},
\label{pro}
\end{align}
and apply Lemma \ref{L:transfer}.

The de Moivre--Laplace theorem and the relation $\int_x^{\infty} e^{-t^2/2}dt=\frac{1}{x}e^{-x^2/2}(1+o(1))$ (see, e.g.,~\cite[Relation (1')]{Bol_sequences}) imply
\red{
$$
 \Pr\left(|d_i-np|>\sqrt{2np(1-p)\log n}\right)
= \frac{1+o(1)}{n\sqrt{\pi\log n}}.
$$
}
Therefore,
\begin{align*}
\P{ \left| \widetilde X_i - \widetilde Y_i \right| > \eps } 
&= \P{|X_i-Y_i|>\eps b_n^k}\\
&=~\sum_{|j-np|\leq\sqrt{2np(1-p)\log n}}\P{\left|X_i-\binom{j}{k-1} p^{\binom{k-1}{2}}\right|>\eps b_n^k,\, d_i=j}+o\left(\frac{1}{n}\right).
\end{align*}

It remains to bound from above $\P{\left.X_i-\binom{j}{k-1} p^{\binom{k-1}{2}}>\eps b_n^k~\right|~d_i=j}$ 
and $\P{\left.X_i-\binom{j}{k-1} p^{\binom{k-1}{2}}<-\eps b_n^k~\right|~d_i=j}$. For the lower tail, we apply Janson's inequality~\cite[Theorem 2.14]{JLR} that does not work, in general, for upper tails. However, a weaker bound~\cite[Proposition 2.44]{JLR} can be applied for that. To apply the bounds, we need to compute the number of $(k-1)$-cliques that are not edge-disjoint with a given $(k-1)$-clique in $K_j$ (which is denoted by $\Delta$ below) and the expected number of pairs of non-edge-disjoint $(k-1)$-cliques in $\mathcal{G}_{j,p}$ (which is denoted by $\overline{\Delta}$ below).\\

For $j\in\mathbb{N}$ such that $|j-np|\leq\sqrt{2np(1-p)\log n}$, denote the number of $(k-1)$-subsets of $[j]$ having at least 2 common element with $[k-1]$ by $\Delta$. Clearly,
$$
\Delta={j\choose k-1}-{j-k+1\choose k-1}-(k-1){j-k+1\choose k-2}={k-1\choose 2}\frac{j^{k-3}}{(k-3)!}(1+o(1)).
$$
Moreover, let $\overline{\Delta}$ be the expected number of pairs of 
\purple{(not necessarily distinct)} $k$-cliques with non-empty edge intersections:
$$
\begin{aligned}
\overline{\Delta}={j\choose k-1}\sum_{\ell=2}^{k-1}{k-1\choose\ell}{j-k+1\choose k-1-\ell}p^{(k-1)(k-2)-{\ell\choose 2}}=\\
\frac{j^{2k-4}}{(k-1)!(k-3)!}{k-1\choose 2}p^{(k-1)(k-2)-1}(1+o(1)).
\end{aligned}
$$
By \eqref{log_bound} and \cite[Proposition 2.44]{JLR}, 
uniformly over all $j\in\mathbb{N}$ such that $|j-np|\leq\sqrt{2np(1-p)\log n}$, we have
\begin{multline}
 \P{\left.X_i-\binom{j}{k-1} p^{\binom{k-1}{2}}>\eps b_n^k\,\right| d_i=j}\leq\\
 (\Delta+1)\exp\left[-\frac{\varepsilon^2 \left[b_n^k\right]^2}{4(\Delta+1)(\E{X_i|d_i=j}+\eps b_n^k/3)}\right]=\\
 \exp\left[-\frac{\varepsilon^2 p^{{k-1\choose 2}}np(1-p)}{4(k-2)^2\log n}(1+o(1))\right]=o\left(\frac{1}{n}\right).
\label{expect_Janson_above}
\end{multline}
Moreover, by \eqref{log_bound} and Janson's inequality~\cite[Theorem 2.14]{JLR}, uniformly over all $j\in\mathbb{N}$ such that $|j-np|\leq\sqrt{2np(1-p)\log n}$,
\begin{equation}
\begin{aligned}
 \P{\left.X_i-\binom{j}{k-1} p^{\binom{k-1}{2}}<-\eps b_n^k\,\right|~d_i=j}
 \leq\exp\left[-\frac{\varepsilon^2 \left[b_n^k\right]^2}{2\overline{\Delta}}\right]=\\
 \exp\left[-\frac{\eps^2 np^2(1-p)}{2(k-2)^2\log n}(1 \red{-} o(1))\right]=o\left(\frac{1}{n}\right).
\end{aligned}
\label{expect_Janson_below}
\end{equation}

Finally, combining (\ref{expect_Janson_above}) and (\ref{expect_Janson_below}), we get
$$
\sum\limits_{|j-np|\leq\sqrt{2np(1-p)\log n}} \P{\left|X_i-\binom{j}{k-1} p^{\binom{k-1}{2}}\right|>\eps b_n^k,\, d_i=j}=o\left(\frac{1}{n}\right).
$$
\end{proof}



\subsection{Maximum number of $h$-neighbours}\label{S:max-h-neighbours}


The particular case of the \red{following} result for constant $h$ was proved in \cite{rodionov2018distribution}. Let us show that it is a more or less direct corollary of Theorem~\ref{Tmain}.

For $h\in\mathbb{N}$ and $\mathbf{x} \in {[n]\choose h}$, denote the number of common neighbours of vertices in $\mathbf{x}$ in $\mathcal{G}_{n,p}$ by $X_{\mathbf{x}}$. Set $a_{h,n}:=a_n({n\choose h},n,p^h)$, $b_{h,n}:=b_n({n\choose h},n,p^h)$, where $a_n$ and $b_n$ are defined in~(\ref{def:ab}).

\begin{theorem}
Let $h=h(n)=o(\log n/\log\log n)$ and $p = p(n)\in(0,1)$ be such that
\begin{equation}
\frac{p^h}{h^3}\gg\frac{\log^3 n}{n},\quad
1-p\gg\sqrt{\frac{\log\log n}{\log n}}.
\label{common_neighb_cond}
\end{equation}
Then $\left[\max_{\mathbf{x}\in{[n]\choose h}}X_{\mathbf{x}}-a_{h,n}\right]/b_{h,n}$ converges in distribution to a standard Gumbel random variable.  
\label{common_neighb_thm}
\end{theorem}

\begin{proof}
For any $\mathbf{x}\in \binom{[n]}{h}$, $X_{\mathbf{x}}$ follows $\mathrm{Bin}\left(n-h, p^h \right)$. Then, by Lemma~\ref{L:bin},
\begin{equation}
\prod_{\mathbf{x}\in \binom{[n]}{h}} \P{ X_{\mathbf{x}} \le a_{h,n} + b_{h,n} x } \rightarrow e^{-e^{-x}}.
\label{extensions_Gumbel}
\end{equation}

Let us label 
the set ${[n]\choose h}$ by positive integers $1,2,\ldots,{n\choose h}$. Set $d={n\choose h}$ and fix $x\in\mathbb{R}$. For $i\in[d]$, set $A_i=\{X_i>a_{h,n}+b_{h,n}x\}$, $D_i=[d]\setminus D_i^*$, where $D_i^*$ 
\purple{is the set of $h$-subsets of $[n]$ that do not intersect with the $i$-th set 
(the $i$-th set is simply denoted by $i$ in what follows)}.

Let us first verify that $\varphi(\A,\D)=o(1)$. Let $i\in[d]$. We \red{denote by} $\obj_i$ the set of edges of $\mathcal{G}_{n,p}$ that \red{do} not contain vertices of $i$. For any $j \in [i-1]\setminus D_i$, let $X_{j,i}$ be the number of vertices in $i$ adjacent to all vertices in $j$ in $\mathcal{G}_{n,p}$ (notice that $i$ and $j$ are disjoint). Since $X_{j,i}$ is independent of $\obj_{i}$, we get $X_j - \E{ X_j ~\middle|~ \obj_i}= X_{j, i} - \E{ X_{j, i} }.$ Set $\widetilde X_i=\frac{X_i-a_{h,n}}{b_{h,n}}$, $\widetilde X^{(i)}_{j}= \E{ \widetilde X_j ~\middle|~ \obj_i }$. Since $X_{j,i} \sim \mathrm{Bin}\left( h, p^h \right),$
we get by (\ref{common_neighb_cond}) and the Chernoff bound (see, e.g.,~\cite[Theorem 2.1]{JLR}) that, for every $\varepsilon>0$,
\begin{equation}
\begin{aligned}
\P{\left|\widetilde X_{j}  -  \widetilde X^{(i)}_{j} \right| > \varepsilon}=\P{\left|X_{j,i}-hp^h\right|>\varepsilon b_{h,n}}\leq 2\exp\left[-\frac{(\varepsilon b_{h,n})^2}{2(hp^h+\varepsilon b_{h,n}/3)}\right]=\\
\exp\left[-\frac{3\varepsilon b_{h,n}}{2}(1+o(1))\right]\leq\exp\left[-\frac{3\varepsilon}{2}\sqrt{\frac{np^h(1-p)}{2h\log n}}(1+o(1))\right]=o\left(\frac{1}{n^{2h}}\right).
\end{aligned}
\end{equation}
By (\ref{extensions_Gumbel}), we get $\P{\widetilde X_i > x} = {n\choose h}^{-1} e^{-x}(1+o(1))$. Therefore, by the union bound,
\begin{align*}
\Pr\left(\max_{j \in [i-1]\setminus D_i} \left|\widetilde X_{j}  -  \widetilde X_{j}^{(i)} \right| > \varepsilon\right)  \le & {n\choose h} \Pr\left(\left|\widetilde X_{j}  -  \widetilde X_{j}^{(i)} \right| > \varepsilon\right)=\\ & o\left(\frac{1}{{n\choose h}}\right)= o(1)\P{\widetilde X_{i} > x}.
\end{align*}
Lemma \ref{L:mixing} implies $\varphi(\A,\D)=o(1)$.\\

By Corollary~\ref{Cmain}, it remains to verify the conditions $\Delta_1(\A,\D)=o(1)$ and $\Delta_2(\A,\D)=o(1)$. Unfortunately, these conditions do not hold. Nevertheless, the events $( A_i )_{i \in [d]}$ can be modified slightly to make the desired relations hold. Define
$$
 E=\bigcap_{\ell=1}^{h-1}\bigcap_{\mathbf{u}\in{[n]\choose\ell}}\left\{ X_{\mathbf{u}} \le n p^{\ell} +  \sqrt{  2 \ell n p^{\ell} (1-p^{\ell}) \log n } \right\}.
$$
For $i\in[d]$, let $\widetilde A_i=A_i\cap E$
and $\widetilde\A=(\widetilde A_i)_{i\in[d]}$.\\

The following lemma is proven in~\cite{rodionov2018distribution} for constant $h$; for $h=o(\log n/\log\log n)$ the same proof works.

\begin{lemma}[\cite{rodionov2018distribution}]
The following relations hold
\begin{enumerate}
\item $\P{E}=1-o(1)$,
\item for every $x\in\mathbb{R}$, $\P{\widetilde A_i}=(1-o(1))\P{A_i}$ uniformly over all $i\in[d]$,
\item $\sum_{i\in[d]}\sum_{j\in[i-1]\cap D_i}\P{\widetilde A_i\cap\widetilde A_j} = \o{ 1 }$.
\end{enumerate}
\label{lm comm neib}
\end{lemma}

By Lemma~\ref{lm comm neib} and since $\varphi(\A,\D)=o(1)$, uniformly over all $i\in[d]$,
\begin{align*}
&\left|\Pr\left(\bigcup_{j\in[i-1]\setminus D_i}\widetilde A_j~\middle|~\widetilde A_i\right)-\Pr\left(\bigcup_{j\in[i-1]\setminus D_i}\widetilde A_j\right)\right| \\
&\leq~\left|
 \Pr\left(\bigcup_{j\in[i-1]\setminus D_i}\widetilde A_j~\middle|~ A_i\right)\frac{\Pr(A_i)}{\Pr(\widetilde A_i)}-\Pr\left(\bigcup_{j\in[i-1]\setminus D_i}A_j\right)\right|+\Pr(\overline{E}) \\
&\leq~\left|
 \Pr\left(\bigcup_{j\in[i-1]\setminus D_i}A_j~\middle|~ A_i\right)\frac{\Pr(A_i)}{\Pr(\widetilde A_i)}-\Pr\left(\bigcup_{j\in[i-1]\setminus D_i}A_j\right)\right|+\frac{\Pr(A_i\cap\overline{E})}{\Pr(\widetilde A_i)}+\Pr(\overline{E})\\ 
&\leq~\left|
 \Pr\left(\bigcup_{j\in[i-1]\setminus D_i}A_j~\middle|~ A_i\right)(1+o(1))-\Pr\left(\bigcup_{j\in[i-1]\setminus D_i}A_j\right)\right|+\frac{\Pr(A_i)}{\Pr(\widetilde A_i)}-1+\Pr(\overline{E})=o(1).
\end{align*}
Therefore, $\varphi(\widetilde\A,\D)=o(1)$ as well.\\

Notice that the third statement of Lemma~\ref{lm comm neib} is exactly $\Delta_1'(\widetilde\A,\D)=o(1)$ (see the definitions of $\Delta_1'$ and $\Delta_2'$ in Section~\ref{Sec:related}). It remains to prove that $\Delta_2'(\widetilde\A,\D)=o(1)$. But this is straightforward:
$$
\Delta_2'(\widetilde\A,\D)=\sum_{i\in[d]}\sum_{j\in[i-1]\cap D_i}\Pr(\widetilde A_i)\Pr(\widetilde A_j)\leq \sum_{i\in[d]}\sum_{j\in[i-1]\cap D_i}\Pr(A_i)\Pr(A_j)\leq
$$
$$
{n\choose h}^{-1}e^{-2x}(1+o(1))\max_{i\in[d]}|D_i|=\frac{{n\choose h}-{n-h\choose h}}{{n\choose h}}e^{-2x}(1+o(1))=o(1).
$$
By Theorem~\ref{Tmain}, we get that (\ref{a_e_indep_formula_for_A}) holds for $\widetilde\A$. The first two statements of Lemma~\ref{lm comm neib} imply 
\purple{that (\ref{a_e_indep_formula_for_A}) also holds} for $\A$. Indeed, $\Pr(E)=1-o(1)$ implies that
$$
\Pr\left(\cap_{i\in[d]}\overline{A_i}\right)=\Pr\left(\cap_{i\in[d]}\overline{\widetilde A_i}\right)-\Pr\left(\overline{E}\setminus\left(\cap_{i\in [d]}\overline{A_i}\right)\right)=\Pr\left(\cap_{i\in[d]}\overline{\widetilde A_i}\right)+o(1);
$$
and $\Pr(\widetilde A_i)=(1-o(1))\Pr(A_i)$ implies
$$
\prod_{i\in[d]}\Pr\left(\overline{\widetilde A_i}\right)=\prod_{i\in[d]}\left[1-\Pr(A_i)(1+o(1))\right]=\left(1-\frac{e^{-x}+o(1)}{d}\right)^d\to e^{-e^{-x}}.
$$

\end{proof}

\subsection{Further results in maximum extensions counts}
\label{extensions_further}

As we discussed in the beginning of Section~\ref{rg_applications}, the above results are in the framework of extensions counting. Given a strictly balanced grounded pair $(H,G)$ with $|V(H)|=h$, we are interested in the \purple{asymptotic} behaviour of $\max_{\mathbf{x}\in{[n]_h}}X_{\mathbf{x}}$. Recall that, in~\cite{spencer_extensions}, Spencer proved the law of large numbers~(\ref{extensions_lln}). In recent paper~\cite{vsileikis2022counting}, \v{S}ileikis and Warnke studied the validity of this law when $\mu=\Theta(\log n)$.


In Section~\ref{max_clique_result}, we found an optimal denominator in the the law of large numbers for $h=1$, $G=K_k$ and $p$ satisfying~(\ref{log_bound}) (i.e. far from the threshold value):
$$
\frac{\max_{i\in[n]} X_i-\mu }{\mu(k-1)\sqrt{2(1-p)\log n/(pn)}}\stackrel{\Pr}\rightarrow 1.
$$
Notice that the result holds for the numerator $\max_{i\in[n]}|X_i-\mu|=\max\{\max_{i\in[n]}X_i-\mu,\mu-\min_{i\in[n]}X_i\}$ as well. Indeed, let $d_i$ be the degree of the vertex $i$. Theorem~\ref{max_degree_hyper} implies the asymptotic distribution of the minimum degree of $\mathcal{G}_{n,p}$ since it equals in distribution to $n-\max_{i\in[n]}d_i[G_{n,1-p}]$. Thus, 
$$
\Pr\left(\min_{i\in[n]}\E{X_i|d_i}\geq \widetilde a_n^k-b_n^k x\right)\to e^{-e^{-x}}
$$
where $\widetilde a_n^k=\frac{1}{(k-1)!}(pn)^{k-2}p^{{k-1\choose 2}}n-a_n^k$. To get the distribution of the minimum degree, it remains to reformulate Lemma~\ref{L:transfer} for the events $A_i:=\{X_i<\widetilde a_n^k-b_n^k x\}$ and probabilities $\Pr(\min X_i\geq\widetilde a_n^k-b_n^k x)$, $\Pr(\min Y_i\geq\widetilde a_n^k-b_n^k x)$  (clearly, the same proof works) and follow absolutely the same steps as in the proof of Theorem~\ref{ext thm}.

Our method works not only in the case $h=1$. In Section~\ref{S:max-h-neighbours}, we have found the \purple{asymptotic} distribution of $X_{\mathbf{x}}$ when $h\geq 2$ and $G$ contains a unique vertex outside $H$ which is adjacent to all vertices in $H$.  Our arguments should work even in the case when $H,G$ are both cliques of arbitrary size. Indeed, the result for cliques $G$ such that $|V(G)|-|V(H)|\geq 2$ can be obtained from Theorem~\ref{common_neighb_thm} using Lemma~\ref{L:transfer} in the same way as we obtain Theorem~\ref{ext thm} from Theorem~\ref{max_degree_hyper} in Section~\ref{max_clique_result}.




\section{Acknowledgements}


M. Isaev is supported by the Australian Research Council Discovery Project DP190100977 and Australian Research Council Discovery Early Career Researcher Award DE200101045. M. Zhukovskii is supported by the Israel Science Foundation grant 2110/22.

\appendix

\section{Bridging sequences: proofs.}
\label{bridges}

%

\subsection{Proof of Lemma \ref{L:mixing}}\label{S:mixing}

Find $\delta>0$ such that $0<F(x-\delta)\leq F(x+\delta)<1$.
Let $\varepsilon\in(0,\delta/2)$.  We may assume that $n$ is so large that $\Pr(A_i)\leq\Pr(A_i(x-2\varepsilon))<1$ for all $i\in[d]$ (otherwise, $\prod_{i=1}^d\Pr(\overline{A_i(x-2\varepsilon)})$ can not approach $F(x-2\varepsilon)$).
For  $i\in [d]$ and $j\in [i-1]\setminus D_i$,  consider  the events $A^{\eps}_{i}:=A_i(x+2\varepsilon)$ and $U_{ji}^{\eps}:= \{ X_j^{(i)} > a_n + (x+\eps)b_n\}$. Then, from (\ref{lem:eq:independent}), we get that uniformly over all  $i\in [d]$
\[
	\Pr\left( \bigcup_{j \in [i-1]\setminus D_i} \left(U^\eps_{ji} \setminus A_j  \right)\right)  = o(1)\Pr(A_i)
	 \quad  \text{and} \quad
\Pr\left( \bigcup_{j \in [i-1]\setminus D_i} \left(A_j^{\eps} \setminus U^\eps_{ji} \right)\right) = o(1)\Pr(A_i).
\]
The events  $U^{\eps}_{ji} $ and $A_i$ are independent since $X_j^{(i)}$ is independent of $X_i$. Therefore,
%
\begin{align*}
		\Pr \left( \bigcup_{j \in [i-1]\setminus D_i} A_j  ~\middle|~ A_i\right)
		&~\geq \Pr \left( \bigcup_{j \in [i-1]\setminus D_i} U^{\eps}_{ji} ~\middle|~ A_i \right)  -o(1)
		\\&=~\Pr \left( \bigcup_{j \in [i-1]\setminus D_i} U^{\eps}_{ji}\right) - o(1)
				\geq\Pr \left( \bigcup_{j \in [i-1]\setminus D_i} A^{\eps}_{j}\right) - o(1).
\end{align*}
By the union bound, we get  that
\begin{align*}
	\Pr \left( \bigcup_{j \in [i-1]\setminus D_i} A_j \right)  - \Pr \left( \bigcup_{j \in [i-1]\setminus D_i} A_j ~\middle|~ A_i \right)
	&\leq
	\Pr \left( \bigcup_{j \in [i-1]\setminus D_i} A_j\right) - \Pr \left( \bigcup_{j \in [i-1]\setminus D_i} A^{\eps}_j\right) + o(1)
	\\
	\leq\sum_{j\in [i-1]\setminus D_i}\Pr\left(A_j\setminus\bigcup_{s\in [i-1]\setminus D_i} A^{\eps}_s\right) + o(1)
	&\leq  \sum_{j\in [i-1]\setminus D_i} \Pr (A_j \setminus A_j^\eps) + o(1).
\end{align*}
Using the inequality $\sum_{i\in [d]}t_i \leq -1+ \prod_{i\in[d]}(1+t_i)$, where $t_i:=
\frac{\Pr(A_i) - \Pr(A_i^{\eps})}{1-\Pr(A_i)}\geq 0,$
and recalling that  $F(x)>0$,  we estimate
\[
	\sum_{i\in [d]} \Pr (A_i \setminus A_i^\eps)
	 \leq \sum_{i\in [d]} \frac{\Pr (A_i) -  \Pr(A_i^\eps)}{1- \P{A_i} }
	\leq   -1 + \prod_{i \in [d]} \frac{ 1- \Pr(A_i^{\eps})} {1-\Pr(A_i)}  \rightarrow
	 \frac{F(x+2\eps)}{F(x)}-1.
\]
Recalling that $F$ is continuous at $x$ and that
the above holds for any $\eps\in(0,\delta/2)$, we conclude that
\[
	\Pr \left( \bigcup_{j \in [i-1]\setminus D_i} A_j \right)  - \Pr \left( \bigcup_{j \in [i-1]\setminus D_i} A_j ~\middle|~ A_i \right) \leq o(1).
\]
The lower bound
\[
	\Pr \left( \bigcup_{j \in [i-1]\setminus D_i} \red{A_j} \right)  - \Pr \left( \bigcup_{j \in [i-1]\setminus D_i} A_j ~\middle|~ A_i \right) \geq o(1)
\]
is obtained similarly by using the events $A_j^{-\eps}:= A_i(x-2\varepsilon)$, $U_{ji}^{-\eps}:=\{X_j^{(i)}>a_n + (x-\varepsilon) b_n\}$ and the relations
\[
	\Pr\left( \bigcup_{j \in [i-1]\setminus D_i} \left(A_j\setminus U^{-\eps}_{ji}  \right)\right)  = o(1)\Pr(A_i),\quad
\Pr\left( \bigcup_{j \in [i-1]\setminus D_i} \left(U^{-\eps}_{ji}\setminus A_j^{-\eps} \right)\right) = o(1)\Pr(A_i),
\]
that hold uniformly over all  $i\in [d]$. This completes the proof of Lemma \ref{L:mixing}.


\subsection{Proof of Lemma \ref{L:transfer}}\label{S:transfer}




Find $\delta>0$ such that $0<F(x-\delta)\leq F(x+\delta)<1$. Let
$\eps\in(0,\delta)$.
Let
  $A_i^{\eps}:= A_i(x+\eps)$, $B_i:= \{ Y_i >a_n+b_nx\}$. 
\red{From assumption (i), we get
\begin{align*}
   1 - \Pr\Big(\bigcup_{i\in [d]} A_i^{\eps}\Big) \rightarrow F(x+\eps).
\end{align*}
Also by the third assumption from the list of preliminary assumptions at the beginning of Subsection \ref{S:bridging}, we obtain
\begin{align*}
   \prod_{i \in [d]} \left(1- \Pr(A_i^{\eps}) \right)  \rightarrow F(x+\eps).
\end{align*}
Therefore, we conclude
\begin{equation}\label{eq:i}
   1 - \Pr\Big(\bigcup_{i\in [d]} A_i^{\eps}\Big)  \sim
   \prod_{i \in [d]} \left(1- \Pr(A_i^{\eps}) \right)  \rightarrow F(x+\eps).
\end{equation}
}
Since $F(x+\eps)\geq F(x-\eps)>0$, we get
\begin{equation}
\sum_{i \in [d]} \Pr(A_i^{\eps}) \leq -\sum_{i \in [d]} \log \left(1- \Pr(A_i^{\eps}) \right)  =O(1).
\label{eq:big-O}
\end{equation}
From (ii), we find that $\Pr(A_i^{\eps}\setminus B_i) =  o(1) \Pr(A_i^{\eps}).$
Then the relations
\begin{eqnarray*}
 \Pr\Big(\bigcup_{i\in [d]}A_i^{\eps}\Big)-\Pr\Big(\bigcup_{i\in [d]}B_i\Big)&\leq&
 \Pr\Big(\bigcup_{i\in [d]}A_i^{\eps}\setminus\bigcup_{i\in [d]}B_i\Big)
 \\ & \leq& \sum_{i\in[d]}\Pr\Big(A_i^{\eps}\setminus\bigcup_{j\in [d]}B_j\Big)\leq
 \sum_{i\in[d]}\Pr\Big(A_i^{\eps}\setminus B_i\Big)
\end{eqnarray*}
imply
\[
	\Pr\Big(\bigcup_{i\in [d]} B_i\Big) \geq
	\Pr\Big(\bigcup_{i\in [d]} A_i^{\eps}\Big) - o(1) \sum_{i\in [d]} \Pr(A_i^\eps)   = 1- F(x+\eps) -o(1).
\]
The last equality follows from \eqref{eq:i} and (\ref{eq:big-O}). 
Recalling that $F$ is continuous and that
the above  holds for any $\eps\in(0,\delta)$, we conclude that $1 - \Pr\Big(\bigcup_{i\in [d]} B_i\Big)  \leq F(x) +o(1)$.
The lower bound $1 - \Pr\Big(\bigcup_{i\in [d]} B_i\Big)  \geq F(x)-o(1)$
is obtained similarly, using the events  $A_i^{-\eps}= A_i(x-\eps)$ and the relations $\Pr(B_i\setminus A_i^{-\eps}) =  o(1) \Pr(A_i^{-\eps})$ that follow directly from (ii).

\end{document}